\documentclass{birkjour}

 \newtheorem{thm}{Theorem}[section]
 \newtheorem{cor}[thm]{Corollary}
 
 \newtheorem{prop}[thm]{Proposition}
 \theoremstyle{definition}
 \newtheorem{defn}[thm]{Definition}
 \theoremstyle{remark}
 \newtheorem{rem}[thm]{Remark}
 
 \numberwithin{equation}{section}
\usepackage{cite}
\usepackage{color}
\begin{document}

\title[Quaternionic proportional fractional Fueter operator]{A quaternionic proportional fractional\\ Fueter-type operator calculus}

\author[J. O. Gonz\'alez-Cervantes]{Jos\'e Oscar Gonz\'alez-Cervantes}

\address{Departamento de Matem\'aticas, ESFM-Instituto Polit\'ecnico Nacional.\\
07338\\
Ciudad M\'exico, M\'exico}
\email{jogc200678@gmail.com}

\author{Juan Bory-Reyes}
\address{SEPI-ESIME-Zacatenco-Instituto Polit\'ecnico Nacional.\br
07338\br
Ciudad M\'exico, M\'exico}
\email{juanboryreyes@yahoo.com}

\subjclass{Primary 26A33; 30G30;\\ 30G35; Secondary 45P05}

\keywords{Quaternionic analysis; Fractional Fueter operator; Proportional fractional integrals and derivatives}

\begin{abstract}
The main goal of this paper is to construct a proportional analogues of the quaternionic  fractional Fueter-type operator recently introduced in the literature. We start by establishing a quaternionic version of the well-known proportional fractional integral and derivative with respect to a real-valued function via the Riemann-Liouville fractional derivative. As a main result, we prove a quaternionic proportional fractional Borel-Pompeiu formula based on a quaternionic proportional fractional Stokes formula. This tool in hand allows us to present a Cauchy integral type formula for the introduced quaternionic proportional fractional Fueter-type operator with respect to a real-valued function. 
\end{abstract}

\maketitle

\section{Introduction} 
Quaternionic analysis, as an extension of the classical theory of complex holomorphic functions, focuses on the connection between analysis, geometry in $\mathbb R^4$ (even $\mathbb R^3$) and the algebraic structure of quaternions. Its tools can be applied to several different areas of physics, natural sciences and engineering. With no claim of completeness, for a more extended introduction to quaternionic analysis  we refer to the books \cite{GS1, GS2, GHS, Krav, KS} and the references therein. However, the classical works on the subject go back as far as \cite{De, F, Ha, MT, sudbery}.

Fractional calculus, which addresses the derivatives and integrals with any order, has attracted an increasing attention turning out to be useful in mathematical modeling of social and natural systems as well as engineering problems of the real world \cite{D, H, K, KST,  OT, P, SKM, Ro}.

In the literature, one can find many variants of fractional operators (see \cite{AJ, B, KAYS, K, Ka1, Ka2, AB} amongst others). Nevertheless, in several applications, the distinct approaches lead to identical results or can be related directly or indirectly in a unified framework, see for instance \cite{FU, OBT, OB}.

One approach to the last issue focus on the tempered fractional calculus model, which has been a subject of investigation in recent years for a number of its applications to many areas of applied science and engineering. The first description of the tempered fractional integral seems to go back as far as \cite{AMS, Bu, Pi}, but a very readable and more precisely account of the associated model of fractional calculus has been used successfully in \cite{LDZ, MSC}. 

The topic of tempered fractional calculus has been recently re-discovered under the name of proportional (also generalized proportional) fractional calculus \cite{JARH, JAAl, JAA, JUAB}, which is essentially equivalent to the tempered framework via the identities appear in \cite[Remark 2.1]{FU}. 

However, the possibility that the second parameter in the tempered fractional calculus to be any complex number certainly makes it a more general model, although the use of complex parameter is of limited interest and more complicated to interpret.

The calculus of classical fractional integration and differentiation of a function with respect to another function was developed in the standard textbooks of \cite[§18.2]{SKM} and \cite[§2.5]{KST}. For a fuller treatment of the more general case of proportional (tempered) framework we refer the reader to \cite{JARH, JAAl, JAA, JUAB, FFUS, MKFF} and the references given there.

The development of links between quaternionic analysis (or more general Clifford analysis) and conventional fractional calculus (related to Riemann-Liouville and Caputo approaches) represents a very recent topic of research. In particular, some first steps to introduce a fractional hyperholomorphic function theory have been made in the works \cite{BP,BPP, DM, GB, PAAB, YX, BG2, FRV, KV, V} and the references therein. In these papers, the authors established a fractional integro-differential operator calculus for quaternion-valued functions. 

The current work is a continuation of the previous one \cite{BG2}, where a fractional generalized Fueter operator with respect to a vector-valued function was first introduced. Now we develop a quaternionic proportional fractional Fueter-type operator calculus with respect to a real-valued function via Riemann-Liouville fractional derivative in quaternionic analysis context. As a main result, we prove a quaternionic proportional fractional Borel-Pompeiu formula based on a quaternionic proportional fractional Stokes formula. 

Motivated by \cite{FRV}, we touch the duality phenomenon of the Stokes and Borel-Pompeiu formulas, which manifests itself between Caputo and Riemann-Liouville fractional derivatives, as well as between left and right fractional derivatives. 

The structure of the work reads as follows: After this brief introduction Preliminary's section contains a summary of basic facts about quaternionic analysis, fractional derivatives and their main properties. In Section 3 our main results are stated and proved.

\section{Preliminaries} 
In this section, we present some basic concepts of fractional proportional operators as well as rudiments of quaternionic analysis employed throughout the work. The material provided can be found in \cite{JARH, JAAl, JAA, JUAB} and \cite{MS, S1, shapiro1, shapiro2} respectively.

An overlap between these two subjects, fractional proportional operators and quaternionic analysis, is to be discussed in the present work.

\subsection{Proportional fractional integrals and derivatives of a function with\\ respect to another function}
Consider the continuous functions $ \chi_0 ,  \chi_1: [0, 1]\times \mathbb R\to [0,\infty) $ such that  
$$\lim_{\sigma\to 0^+} \chi_1(\sigma,t) = 1,  \lim_{\sigma\to 0^+} \chi_0(\sigma,t) = 0, \lim_{\sigma\to 1^-} \chi_1(\sigma,t) = 0, \lim_{\sigma\to 1^-} \chi_0(\sigma,t) = 1.$$
The proportional derivative of $f\in C^1(\mathbb R)$ of order $\sigma\in [0,1]$ is given by
\begin{align*} 
D^{\sigma}f (t) =  \chi_1(\sigma,t)f (t) +  \chi_0(\sigma,t)f'(t).  
\end{align*}

In particular, for $ \chi_1(\sigma, t) = 1- \sigma$  and $\chi_0(\sigma, t) = \sigma$  we have 
\begin{align}\label{equa1}
(D^{\sigma}f )(t) = & (1-\sigma)f (t) +  \sigma f'(t), \\
({}_aI^{1-\sigma} f) (t) = &\frac{1}{\sigma}\int_
a^t  e^{ \frac{\sigma-1}{\sigma}(t-s)}f (s) ds, \nonumber  
 \end{align}
What is more, given  $\varphi\in C^1(\mathbb R )$ such that  $\varphi '(t)>0 $, for all $t\in \mathbb R$,  then the  
proportional derivative of $f\in C^1(\mathbb R)$ with respect to $\varphi$ of order $\sigma\in [0,1]$, is defined to be
\begin{align}\label{DerPropRespecFuntL}
(D^{\sigma,\varphi} f )(t) = (1-\sigma)f(t) + \sigma\frac{f'(t)}{\varphi'(t)}.
\end{align}
The proportional integral of $f$  with respect to $\varphi$  and order $n \in \mathbb N$ reads   
\begin{align}\label{IntPropRespecFuntL}
 ({}_a I^{n,\sigma,\varphi} f) (t) = \frac{1}{\sigma^n \Gamma(n)}
 \int_a^t
e^{\frac{\sigma-1}{\sigma} (\varphi(t)-  \varphi(\tau)) }
(\varphi(t)- \varphi(\tau))
^{n-1}f (\tau)\varphi'(\tau) d\tau.
\end{align}
 
Note that defining 
\begin{align}\label{AxuliarR}
 (E^{\sigma,\varphi} f )(t) := \varphi'(t) \sigma^{-1}(1-\sigma)f(t) +  f'(t) .
\end{align}
then we see that  
\begin{align}\label{AxuliarComputationsR}
\frac{\partial }{\partial t}(e^{\varphi(t) \sigma^{-1}(1-\sigma)   }f(t)) = e^{\varphi(t) \sigma^{-1}(1-\sigma)  } ( E^{\sigma,\varphi} f )(t) .
\end{align}
The previous identities describe how the usual derivative $\displaystyle\frac{\partial }{\partial t}$ relates to $D^{\sigma,\varphi}$. 

Set $\alpha\in \mathbb C$, with $0< Re(\alpha) <1 $.  The left and right proportional fractional integrals, in the Riemann-Liouville setting, of $f\in AC^1([a,b])$ with order $\alpha$ are defined by
\begin{align}\label{IntPropFracL}
({}_aI^{\alpha,\sigma} f) (t) = \frac{1}{\sigma^{\alpha} \Gamma(\alpha)}\int_a^t e^{ \frac{\sigma-1}{\sigma}  (t-\tau)} (t-\tau)^{ \alpha-1} f(\tau) d\tau \end{align}
and \begin{align}\label{IntPropFracR}
( I_b^{\alpha,\sigma} f) (t) = \frac{1}{\sigma^{\alpha} \Gamma(\alpha)}\int_ t^b e^{ \frac{\sigma-1}{\sigma}  ( \tau-t)} ( \tau-t)^{ \alpha-1} f(\tau) d\tau \end{align}
respectively.  

The left and right proportional derivatives of $f$ in  Riemann-Liouville sense are defined by
\begin{align}\label{DerPropFracL}
({}_aD^{\alpha,\sigma} f) (t) = D^{ \sigma} {}_aI^{1-\alpha,\sigma} f(t)
  \quad \textrm{and} \quad 
( D_b^{\alpha,\sigma} f) (t) = D^{\sigma}  I_b^{1-\alpha,\sigma} f(t),
\end{align}
and the left and right proportional derivatives of $f$ in  Caputo  sense are defined by
\begin{align}\label{DerPropFracLC}
({}^C _aD^{\alpha,\sigma} f) (t) ={}_aI^{1-\alpha,\sigma}  D^{ \sigma} f(t)
  \quad \textrm{and} \quad 
( {}^CD_b^{\alpha,\sigma} f) (t) = I_b^{1-\alpha,\sigma}  D^{\sigma}  f(t),
\end{align}

In addition, the left fractional  proportional integral with respect to $\varphi$ is given by
\begin{align}\label{IntFracPropRespecFuntL}  
 ({}_a I^{\alpha,\sigma,\varphi} f) (t) := \frac{1}{\sigma^\alpha \Gamma(\alpha)}
 \int_a^t
e^{\frac{\sigma-1}{\sigma} (\varphi(t)-\varphi(\tau)) }
(\varphi(t)- \varphi(\tau))
^{\alpha-1}f (\tau)\varphi'(\tau) d\tau, 
\end{align}  
while the right fractional proportional integral reads
\begin{align}\label{IntFracPropRespecFuntR}
(I_b^{\alpha,\sigma,\varphi} f)(t) := \frac{1}{\sigma^\alpha \Gamma(\alpha)}
 \int_ t^b
e^{\frac{\sigma-1}{\sigma} (\varphi(\tau)- \varphi(t)) }
(\varphi (\tau)-  \varphi (t))
^{\alpha-1}f (\tau)\varphi'(\tau) d\tau. 
\end{align}
If $1 \leq \Re \alpha $  and $n = [\Re  \alpha] + 1$ we can introduce the left and the right fractional proportional derivatives with respect to $\varphi$ as follow
\begin{align}\label{DerFracPropRespecFuntL}
({}_aD^{\alpha, \sigma,\varphi} f)(t) :=  D^{n,\sigma,\varphi}
{}_aI^{n-\alpha,\sigma,\varphi}f(t) ,  \quad
(D_b^{\alpha, \sigma,\varphi} f)(t) := D^{n,\sigma,\varphi}
 I_b^{n-\alpha,\sigma,\varphi}f(t),
\end{align}
where $D^{n,\sigma, \varphi}:=\underbrace{D^{\sigma, \varphi} \circ \cdots \circ D^{\sigma, \varphi}}_\text{n-times}$. 

Moreover, 
\begin{align}\label{FundTheoFPW}
{}_aD^{\alpha,\sigma,\varphi} \circ 
{}_aI^{\alpha,\sigma,\varphi} f(t) = f(t) \quad \textrm{and} \quad
 D_b^{\alpha,\sigma,\varphi} \circ 
 I_b^{\alpha,\sigma,\varphi} f(t) = f(t).
\end{align}

Clearly, if    $\sigma= 1$ from previous definitions we obtain 
\begin{itemize}
\item   Riemann-Liouville fractional derivatives and integrals,  if  $\varphi(t) = t$ for all $t$.
\item The fractional derivatives and integrals  in the Katugampola setting,  if     $\varphi(t) =\frac{ t^{\mu}}{\mu}$ for all $t$.
\item   Hadamard fractional derivatives and integrals, if  $\varphi(t) =\ln(t)$ for all $t>0$.
\end{itemize}
Finally, in \cite[Definition 4.4.]{JAA} we see that the left and the right- Caputo fractional proportional derivative of a function with
respect to $\varphi$ is given by 
\begin{align}\label{CaputoDer}
({}^{C}_aD^{\alpha, \sigma, \varphi} f)(t) := ({}_a I^{1-\alpha,\sigma,\varphi} ) (D^{\sigma,\varphi} f )(t)
\end{align}
and 
\begin{align*}
({}^{C} D_b^{\alpha, \sigma, \varphi} f)(t) := ( I_b^{1-\alpha,\sigma,\varphi} ) (D^{\sigma,\varphi} f )(t).
\end{align*}

\subsection{Brief summary of the quaternionic analysis}
The skew-field of quaternions $\mathbb H$  with basis $\{1, {\bf i}, {\bf j}, {\bf k}\}$ is a four dimensional associative division algebra over $\mathbb R$. 

Any quaternion $x$ may be represented by
$$x=x_0 1+x_{1} {\bf i}+x_{2} {\bf j}+x_{3} {\bf k},$$ 
where $x_{k}\in \mathbb R, k= 0,1,2,3$.

Addition and multiplication by a real scalar is defined in component-wise manner. The multiplication distributes over addition, $1$ is
the multiplication identity, and
$$ {\bf i}^{2}= {\bf j}^{2}= {\bf k}^{2}=-1,$$
$$ {\bf i}\, {\bf j}=- {\bf j}\,\  {\bf i}= {\bf k};\   {\bf j}\, {\bf k}=- {\bf k}\, {\bf j}= {\bf i};\  {\bf k}\, {\bf i}=- {\bf i}\, {\bf k}= {\bf j}.$$ 
On $\mathbb H$ a mapping conjugation is defined as $x\rightarrow {\overline x}:=x_0-x_{1}{\bf i}-x_{2}{\bf j}-x_{3}{\bf k}$. By the length or
norm of $x\in \mathbb H$, denoted by $|x|$, we mean
$$|x|^{2} := x^{2}_{0}+x^{2}_{1}+x^{2}_{2}+x^{2}_{3}= x\,{\overline x}={\overline x}\,x.$$

The quaternionic scalar product of two arbitrary elements $q, x\in\mathbb H$ is given by 
$$\langle q, x\rangle:=\frac{1}{2}(\bar q x + \bar x q) = \frac{1}{2}(q \bar x + x  \bar q).$$

An ordered set of quaternions $\psi:=\{\psi_0, \psi_1,\psi_2,\psi_3\}$ that satisfies the condition of orthonormality
$$\langle \psi_k, \psi_s\rangle =\delta_{k,s},\ k,s=0,1,2,3,$$ 
($\delta_{k,s}$ is Kronecker's symbol) is called a structural set.

For fixed structural set $\psi$, any quaternion $x$ can be rewritten as 
$$x_{\psi} := \sum_{k=0}^3 x_k\psi_k,$$ 
where $x_k\in\mathbb R$ for all $k$. The mapping
\begin{equation} \label{mapping}
\sum_{k=0}^3 x_k\psi_k \rightarrow (x_0,x_1,x_2,x_3).
\end{equation}
will used in essential way.

Given $q, x\in \mathbb H$, let us introduce the notation $\langle q, x \rangle_{\psi}$ for $\sum_{k=0}^3 q_k x_k.$

In what follows, $\Omega\subset\mathbb H\cong \mathbb R^4$ denotes an open bounded domain with boundary $\partial \Omega$ a 3-dimensional smooth surface. The closure of $\Omega$ will be denoted by $\overline\Omega$.
 
We will consider functions $f$ defined in $\Omega$ with value in $\mathbb H$. They may be written as: $f=\sum_{k=0}^3 f_k \psi_k$, where $f_k, k= 0,1,2,3,$ are $\mathbb R$-valued functions in $\Omega$. Properties as continuity, differentiability, integrability and so on, which as ascribed to $f$ have to be posed by all components $f_k$. We will follow standard notation, for example $C^{1}(\Omega, \mathbb H)$ denotes the set of continuously differentiable $\mathbb H$-valued functions defined in $\Omega$. 
 
Every structural set generates left- and right- $\psi-$Fueter operators acting on $f, g \in C^1(\Omega,\mathbb H)$, which are defined respectively by   
$${}^{{\psi}}\mathcal D[f] := \sum_{k=0}^3 \psi_k \partial_k f$$ 
and 
$${}^{{\psi}}\mathcal  D_r[g] :=  \sum_{k=0}^3 \partial_k g \psi_k,$$ 
where $\partial_k f :=\displaystyle \frac{\partial f}{\partial x_k}$ for all $k$. 

Respective solutions $f, g$ of ${}^{{\psi}}\mathcal D[f]=0$ (${}^{{\psi}}\mathcal  D_r[g]=0$) in  $\Omega\subset\mathbb H$  will be referred to as left (right) $\psi$-hyperholomorphic functions.

The Stokes'formula for the $\psi$-hyperholomorphic functions theory reads 
\begin{align}\label{StokesHyp} \int_{\partial \Omega} { g }\nu^\psi_x f =  &   \int_{\Omega } \left( g 
{}^\psi \mathcal  D[f] + {}^{{\psi}}\mathcal  D_r[g] f\right)dx,
\end{align}
for all $f,g \in C^1(\overline{\Omega}, \mathbb H)$. Here $dx$ denotes the differential form of 4-dimensional volume in $\mathbb R^4$ and  
$$\nu^{{\psi} }_{x}:=-sgn\psi \left( \sum_{k=0}^3 (-1)^k \psi_k d\hat{x}_k\right)$$ 
stands for the differential form of 3-dimensional surface in $\mathbb R^4$ according to $\psi$, 
where $d\hat{x}_k$ is is the differential form $dx_0 \wedge dx_1\wedge dx_2  \wedge  dx_3$ with the factor $dx_k$ omitted. In addition, the value of $sgn\psi$ is $1$, or $-1$, if $\psi$ and $\psi_{std}:=\{1, {\bf i}, {\bf j}, {\bf k}\}$ have the same orientation or not, respectively.  
 
Let us recall that the $\psi$-hyperholomorphic Cauchy Kernel is given by 
 \[ K_{\psi}(y- x)=\frac{1}{2\pi^2} \frac{ \overline{y_{\psi} - x_{\psi}}}{|y_{\psi} - x_{\psi}|^4}.\] 
The Borel-Pompieu integral formula shows that
\begin{align}\label{BorelHyp}  &  \int_{\partial \Omega}(K_{\psi}(y-x)\nu_{y}^{\psi} f(y)  +  g(y)   \nu_{y}^{\psi} K_{\psi}(y-x) ) \nonumber  \\ 
&  - 
\int_{\Omega} (K_{\psi} (y-x) {}^{\psi}\mathcal D [f] (y) + {}^{{\psi}}\mathcal  D_r [g] (y) K_{\psi} (y-x) )dy   \nonumber \\
		=  &  \left\{ \begin{array}{ll}  f(x) + g(x) , &  x\in \Omega,  \\ 0 , &  x\in \mathbb H\setminus\overline{\Omega}.                     
\end{array} \right. 
\end{align} 
\section{Main results} 
Let $\vec{\alpha}:= (\alpha_0, \alpha_1,\alpha_2,\alpha_3),\ \vec{\beta}:=(\beta_0, \beta_1, \beta_2, \beta_3) \in \mathbb C^4$. Set $a:=\sum_{k=0}^3\psi_k a_k,\ b:=\sum_{k=0}^3\psi_k b_k,\ \sigma:=\sum_{k=0}^3\psi_k \sigma_k \in \mathbb H$ such that $a_k< b_k$, $0< \Re \alpha_k, \Re\beta_k \ \mbox{and} \ \sigma_k < 1$ for $k=0,1,2,3$. 

Write 
${J_a^b }:= \{  \sum_{k=0}^3\psi_k x_k \in \mathbb H \ \mid \ a_k\leq   x_k \leq   b_k,\ k=0,1,2,3\}= [a_0,b_0] \times [a_1,b_1] \times [a_2,b_2]  \times [a_3,b_3].$ 

By ${f}=\sum_{i=0}^3\psi_i f_i\in AC^1(J_a^b,\mathbb H)$ we mean that mapping  
$$x_j \mapsto f_i(q_0,\dots,x_j,\dots, q_3)$$ 
belongs to $AC^1([a_j, b_j], \mathbb R)$ for each $q\in J_a^b$ and all $i, j=0,1,2,3$. 

\subsection{Quaternionic proportional fractional Fueter-type operator in Riemann-Louville and Caputo sense} 
In our analysis, the structure of the integral and differential operators given in \eqref{IntPropFracL} and  \eqref{IntPropFracR} are preserved. Let ${f}\in AC^1(J_a^b,\mathbb H)$ then the left and the right fractional proportional integrals of $f$ by coordinates
with order $\alpha$ and proportion associated to $\sigma$ are given by
\begin{align*} 
 ({}_a{\mathcal I}^{\alpha,\sigma}  f )(x,q)  = & \sum_{i=0}^3 {}_{a_i}I^{\alpha_i,\sigma_i} f (q_0,\dots, x_i, \dots q_3) \\
= & \sum_{j=0}^3 \psi_j  \sum_{i=0}^3   \frac{1}{\sigma_i^{\alpha_i}\Gamma(   \alpha_i    )} \int_{a_i}^{x_i}\frac{ e^{\frac{\sigma_i- 1}{\sigma_i}(x_i-\tau_i)}   } {  (x_i - \tau _i)^{1-   {  \alpha_i }  }  } f_j  
(q_0, \dots,  \tau _i, \dots, q_3)  d \tau_i.
\end{align*}
and 
\begin{align*} 
({\mathcal I}_b^{\alpha,\sigma}  f )(x,q)  = & \sum_{i=0}^3 I_{b_i} ^{\alpha_i,\sigma_i} f (q_0,\dots, x_i, \dots q_3).
\end{align*}
Let $f\in C^1(J_a^b, \mathbb H)$. To extend operator \eqref{equa1}, set
\begin{align*}
{}^{\psi }{\mathcal D}^{\sigma} f := (1-\sigma) f + \sigma {}^{\psi}\mathcal D f, \\
{}^{\psi }{\mathcal D_r}^{\sigma} f :=  f (1-\sigma) + {}^{\psi}\mathcal D_r f \sigma.
\end{align*}
\begin{defn} 
The  left and the right fractional proportional derivative of ${f} \in AC^1(J_a^b,\mathbb H)$ in the Riemann-Liuoville sense, on the left, with order $\alpha$ and proportion associated to $\sigma$ are defined to be  
\begin{align*} 
({}^{\psi}_a{\mathcal D}^{\alpha,\sigma}  f )(x,q)  = {}^{\psi }{\mathcal D}^{\sigma}   ({}_a{\mathcal I}^{1-\alpha,\sigma}  f )(x,q) 
\end{align*}
and 
\begin{align*} 
({}^{\psi}{\mathcal D}_b^{\alpha,\sigma}  f )(x,q)  = {}^{\psi }{\mathcal D}^{\sigma}   ( {\mathcal I}_b^{1-\alpha,\sigma}  f )(x,q), 
\end{align*}
where we let $1-\alpha$ stand for $(1-\alpha_0, 1-\alpha_1, 1-\alpha_2, 1-\alpha_3)\in \mathbb C^4 $ and $x$ indicates the differentiation quaternionic variable of ${}^{\psi }{\mathcal D}^{\sigma}$.  

Similarly, the  left  and the right fractional proportional derivative of $f \in AC^1(J_a^b,\mathbb H)$, on the right,  with  order $\alpha$ and proportion associated to $\sigma$ are   
\begin{align*} 
({}^{\psi}_a{\mathcal D}_r^{\alpha,\sigma}f)(x,q) = {}^{\psi }{\mathcal D}_r^{\sigma}({}_a{\mathcal I}^{1-\alpha,\sigma}f)(x,q) 
, \quad ({}^{\psi}{\mathcal D}_{r,b}^{\alpha,\sigma}f)(x,q)  = {}^{\psi }{\mathcal D}_r^{\sigma}({\mathcal I}_b^{1-\alpha,\sigma}f)(x,q). 
\end{align*}
\end{defn}

\begin{defn} 
The  left and the right fractional proportional derivative of $f\in AC^1(J_a^b,\mathbb H)$ in the Caputo sense, on the left, with order $\alpha$ and proportion associated to $\sigma$ are  
\begin{align*} 
({}^{\psi,C}_a{\mathcal D}^{\alpha,\sigma}  f )(x,q)  = ({}_a{\mathcal I}^{1-\alpha,\sigma}\  {}^{\psi }{\mathcal D}^{\sigma}   f )(x,q) 
\end{align*}
and 
\begin{align*} 
({}^{\psi,C}{\mathcal D}_b^{\alpha,\sigma}  f )(x,q)  =   ( {\mathcal I}_b^{1-\alpha,\sigma} \ {}^{\psi }{\mathcal D}^{\sigma}  f )(x,q) 
\end{align*}
and its right versions are 
\begin{align*} 
({}^{\psi,C}_a{\mathcal D}_r^{\alpha,\sigma}  f )(x,q)  = &  ({}_a{\mathcal I}^{1-\alpha,\sigma}\  {}^{\psi }{\mathcal D}_r^{\sigma}  f )(x,q), \\ 
 ({}^{\psi,C}{\mathcal D}_{r,b}^{\alpha,\sigma}f)(x,q) = & ({\mathcal I}_b^{1-\alpha,\sigma} \  {}^{\psi }{\mathcal D}_r^{\sigma}f)(x,q). 
\end{align*}
\end{defn}

Note that ${}^{\psi}_a{\mathcal D}^{\alpha,\sigma}$ and  ${}^{\psi}{\mathcal D}_b^{\alpha,\sigma}$ are natural extensions of ${}_a D^{\alpha,\sigma}$  and $D_b^{\alpha,\sigma} $ given in \eqref{DerPropFracL}, respectively. The same behaviour for quaternionic fractional proportional operators in Caputo sense  \eqref{DerPropFracLC}.

\begin{prop}\label{DFracPropor} Suppose $0<\sigma _k, \rho_k<1$ for $k=0,1,2,3$. Denote $\rho =\sum_{k=0}^3 \psi_k\rho_k $, $\rho^{-1} (1-\rho )= \gamma =\sum_{k=0}^3 \psi_k\gamma_k$ and $\sigma^{-1} (1-\sigma)= \delta =\sum_{k=0}^3 \psi_k\delta_k$ where $\gamma_k ,\delta_k\in \mathbb R$ for all $k$. Then   
\begin{align}\label{identity1}
({}^{\psi}_a{\mathcal D}^{\alpha,\sigma}  f )(x,q)  = &e^{-\sum_{k=0}^3 x_k\delta_k} \sigma \ {}^{\psi}\mathcal D[ e^{ \sum_{k=0}^3 x_k\delta_k}  ({}_a{\mathcal I}^{1-\alpha,\sigma} f)(x,q)]   
\end{align} 
and 
\begin{align}\label{identity2}
({}^{\psi}_a{\mathcal D}_r^{\beta,\rho} f)(x,q)  = &e^{-\sum_{k=0}^3 x_k\gamma_k}   \ {}^{\psi}\mathcal D_r[ e^{ \sum_{k=0}^3 x_k\gamma_k}  ({}_a{\mathcal I}^{1-\beta,\rho}  f)(x,q)]\rho,
\end{align} 
for all $f\in AC^1(J_a^b, \mathbb H)$.
\end{prop}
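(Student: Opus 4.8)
The strategy is to reduce the quaternionic identity to the scalar computation already encoded in \eqref{AxuliarComputationsR}, applied coordinate-by-coordinate. First I would unwind the definition: by construction ${}^{\psi}_a{\mathcal D}^{\alpha,\sigma} f = {}^{\psi}{\mathcal D}^{\sigma}({}_a{\mathcal I}^{1-\alpha,\sigma} f) = (1-\sigma) g + \sigma\, {}^{\psi}\mathcal D g$, where I abbreviate $g := ({}_a{\mathcal I}^{1-\alpha,\sigma} f)(x,q)$. So the whole claim \eqref{identity1} amounts to showing
\begin{align*}
(1-\sigma) g + \sigma\, {}^{\psi}\mathcal D g = e^{-\sum_k x_k \delta_k}\,\sigma\,{}^{\psi}\mathcal D\bigl[e^{\sum_k x_k \delta_k} g\bigr].
\end{align*}
Since $\sigma$ is a fixed quaternion and ${}^{\psi}\mathcal D[h] = \sum_k \psi_k \partial_k h$, I would expand the right-hand side using the Leibniz rule for $\partial_k$ acting on the product of the \emph{real-valued scalar} $e^{\sum_k x_k \delta_k}$ with $g$. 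The key point is that $e^{\sum_k x_k \delta_k}$ is real-valued, hence central in $\mathbb H$, so it commutes past $\psi_k$ and past the left factor $\sigma$ without obstruction; this is what makes the one-variable identity \eqref{AxuliarComputationsR} transfer verbatim to each coordinate direction.

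Carrying this out: $\partial_k\bigl[e^{\sum_j x_j\delta_j} g\bigr] = \delta_k e^{\sum_j x_j\delta_j} g + e^{\sum_j x_j\delta_j}\partial_k g$, so
\begin{align*}
e^{-\sum_j x_j\delta_j}\,\sigma\,{}^{\psi}\mathcal D\bigl[e^{\sum_j x_j\delta_j} g\bigr]
= \sigma \sum_k \psi_k(\delta_k g + \partial_k g)
= \sigma\Bigl(\sum_k \psi_k\delta_k\Bigr) g + \sigma\,{}^{\psi}\mathcal D g.
\end{align*}
Now $\sum_k \psi_k \delta_k = \delta = \sigma^{-1}(1-\sigma)$ by the definition of $\delta$ (here one uses that the $\delta_k$ are the components of $\delta$ in the structural set $\psi$, and that the $\psi_k$ are a basis), so $\sigma\bigl(\sum_k \psi_k\delta_k\bigr) = \sigma\sigma^{-1}(1-\sigma) = 1-\sigma$. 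Substituting gives $(1-\sigma)g + \sigma\,{}^{\psi}\mathcal D g$, which is exactly ${}^{\psi}{\mathcal D}^{\sigma} g$, establishing \eqref{identity1}. For \eqref{identity2} I would run the mirror-image argument with the right Fueter operator ${}^{\psi}\mathcal D_r[h] = \sum_k \partial_k h\,\psi_k$ and the right proportional operator ${}^{\psi}{\mathcal D}_r^{\sigma} h = h(1-\rho) + {}^{\psi}\mathcal D_r h\,\rho$; the exponential weight $e^{\sum_k x_k\gamma_k}$ with $\gamma = \rho^{-1}(1-\rho)$ is again real and central, and after the Leibniz expansion one uses $\bigl(\sum_k \gamma_k \psi_k\bigr)\rho = \rho^{-1}(1-\rho)\rho = (1-\rho)$ — note the factor $\rho$ now sits on the \emph{right}, matching where it appears in ${}^{\psi}{\mathcal D}_r^{\sigma}$.

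I expect no serious obstacle here; the one place demanding care is the noncommutativity bookkeeping — keeping $\sigma$ consistently on the left in \eqref{identity1} and $\rho$ consistently on the right in \eqref{identity2}, and being explicit that $e^{\sum_k x_k\delta_k}$ (resp. $e^{\sum_k x_k\gamma_k}$) is a real scalar and therefore central, which is the only reason the scalar Leibniz computation survives the passage to $\mathbb H$. A secondary routine point is justifying differentiation under the integral sign defining ${}_a{\mathcal I}^{1-\alpha,\sigma} f$, i.e. that $g \in C^1$ in the relevant variables, which follows from $f \in AC^1(J_a^b,\mathbb H)$ together with the standard mapping properties of the proportional fractional integral recalled in the Preliminaries.
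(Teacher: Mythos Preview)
Your proof is correct and follows essentially the same route as the paper's: apply the Leibniz rule to ${}^{\psi}\mathcal D$ (resp.\ ${}^{\psi}\mathcal D_r$) acting on the product of the real, central exponential weight with $g=({}_a{\mathcal I}^{1-\alpha,\sigma}f)(x,q)$, and then collapse $\sigma\delta = \sigma\,\sigma^{-1}(1-\sigma)=1-\sigma$ (resp.\ $\gamma\rho = 1-\rho$). The paper's proof is just a terser version of exactly this computation; your added remarks on centrality of the exponential and on keeping $\sigma$ on the left / $\rho$ on the right make explicit what the paper leaves implicit.
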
 
\begin{proof}
Let us first examine $(\ref{identity1})$. For this purpose, we set
\begin{align*}
 & {}^{\psi}\mathcal D[ e^{ \sum_{k=0}^3 x_k\delta_k}  ({}_a{\mathcal I}^{1-\alpha,\sigma}  f )  (x,q)]  \\
    = &e^{ \sum_{k=0}^3 x_k\delta_k}   \left[  \sigma^{-1} (1-\sigma)   ({}_a{\mathcal I}^{1-\alpha,\sigma}  f)(x,q)
 + {}^{\psi}\mathcal D[ ({}_a{\mathcal I}^{1-\alpha,\sigma}  f )  (x,q) ]   \right] \\
=   & e^{ \sum_{k=0}^3 x_k\delta_k}    \sigma^{-1}  ({}_a{\mathcal D}^{\alpha,\sigma} f)(x,q).
\end{align*} 
The proof of identity $(\ref{identity2})$ runs as before.
\end{proof}

\begin{prop}\label{SBPFP}(Stokes and Borel-Pompeiu formulas induced from ${}^{\psi}_a{\mathcal D}^{\alpha,\sigma}$ and 
${}^{\psi} {\mathcal D}_{r,b}^{\beta,\rho}$). Let $f,g \in AC^1(J_a^b,\mathbb H)$  such that the mappings 
$x\mapsto ({}_a{\mathcal I}^{1-\alpha,\sigma}f)(x,q)$,  $x\mapsto ({\mathcal I}_b^{1-\beta,\rho}  g )(x,q)$
belong to $C^1(J_a^b, \mathbb H)$.

Let $\Omega\subset J_a^b$ be an open bounded domain with boundary $\partial \Omega$ a 3-dimensional smooth surface. Then 
\begin{align*}
 & \int_{\partial \Omega}   ( {\mathcal I}_b^{1-\beta,\rho}  g )  (x,q) \   {}^{\psi}\nu_{x,\gamma,\delta}   \   ({}_a{\mathcal I}^{1-\alpha,\sigma}  f )  (x,q) \\
  =  
   &   \int_{\Omega } \left[   ( {\mathcal I}_b^{1-\beta,\rho}  g )  (x,q)   
  \sigma^{-1}  ({}^{\psi}_a{\mathcal D}^{\alpha,\sigma}  f )(x,q)  \right. \\
& \left.  +  ({}^{\psi} {\mathcal D}_{r,b}^{\beta,\rho}  f )(x,q) \rho^{-1}   ({}_a{\mathcal I}^{1-\alpha,\sigma}  f )  (x,q)
\right] dx_{\gamma,\delta},
\end{align*}
where $ {}^{\psi}\nu_{x,\gamma,\delta} = e^{ \sum_{k=0}^3 x_k(\gamma_k+ \delta_k)} \nu^\psi_x $ and $dx_{\gamma,\delta} =e^{\sum_{k=0}^3 x_k(\gamma_k+\delta_k)}dx.$

About the Borel-Pompieu type formula we have that 
\begin{align*}  
 &    \int_{\partial \Omega} {}_a K^{\alpha,\sigma}_{\psi}(y,x) \ \nu_{y}^{\psi}        \ ({}_a{\mathcal I}^{1-\alpha,\sigma}  f )(y,q)   +
    ( {\mathcal I}_b^{1-\beta, \rho }  g )(y,q) \ \nu_{y}^{\psi} \ K^{\beta,\rho}_{b,\psi}(y,x)         
    \\
  & - 
\sum_{i=0}^3 {}_{a_i}D^{\alpha_i,\sigma_i} \left\{\int_{\Omega} e^{ \sum_{k=0}^3 (y_k-x_k)\delta_k} K_{\psi} (y-x) \sigma^{-1} ({}^{\psi}_a{\mathcal D}^{\alpha,\sigma}f)(y,q) dy  \right\} \nonumber \\
& - 
\sum_{i=0}^3 D_{b_i}^{\beta_i,\rho_i}\left\{ \int_{\Omega}  ({}^{\psi} {\mathcal D}_{r,b}^{\beta,\rho}  f )(y,q)   e^{ \sum_{k=0}^3 (y_k-x_k)\gamma_k}\rho^{-1} K_{\psi} (y-x) dy \right\} \nonumber \\
		=  &  \left\{ 
		\begin{array}{l}
		\displaystyle  \sum_{i=0}^3 (f+g)(q_0,\dots, x_i,\dots, q_3) 
		 +  {}_a N^{\alpha,\sigma}(f,x,q )  +   N_b^{\beta,\rho}(g,x,q ) 
		     , \  x\in \Omega  ,
		 \\ 0 ,   \  x\in \mathbb H\setminus\overline{\Omega},                     
\end{array} \right.  
\end{align*}
where 
$$ {}_aK^{\alpha,\sigma}_{\psi}(y,x) = \sum_{i=0}^3 {}_{a_i}D^{\alpha_i,\sigma_i} [ e^{ \sum_{k=0}^3 (y_k-x_k)\delta_k}K_{\psi}(y-x)],$$
$$K^{\beta,\rho}_{b,\psi}(y,x) = \sum_{i=0}^3  D_{b_i}^{\beta_i,\rho_i}[e^{ \sum_{k=0}^3 (y_k-x_k)\delta_k}K_{\psi}(y-x)],$$
  $${}_a N^{\alpha,\sigma}(f,x,q ) = \sum_{i=0}^3  ({}_{a_i}{ I}^{1-\alpha_i,\sigma_i}  f )(q_0, \dots, x_i , \dots, q_3 ) 		  \left[\sum_{j=0, j\neq i}^3  {}_{a_j}D^{\alpha_j,\sigma_j}(1)  \right]$$
and 
$$ N_b^{\beta,\rho}(g,x,q ) = \sum_{i=0}^3  ({ I}_{b_i}^{1-\beta_i,\rho_i}  g )(q_0, \dots, x_i , \dots, q_3 ) 		  \left[\sum_{j=0, j\neq i}^3  D_{b_j}^{\beta_j,\rho_j}(1) \right].$$
\end{prop}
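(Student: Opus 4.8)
The plan is to derive both identities from their classical $\psi$-hyperholomorphic counterparts \eqref{StokesHyp} and \eqref{BorelHyp} by a change of unknown, with Proposition \ref{DFracPropor} as the translation device. \emph{Step 1: the Stokes formula.} First I would set $F:=e^{\sum_{k=0}^{3}x_{k}\delta_{k}}\,({}_{a}{\mathcal I}^{1-\alpha,\sigma}f)$ and $G:=e^{\sum_{k=0}^{3}x_{k}\gamma_{k}}\,({\mathcal I}_{b}^{1-\beta,\rho}g)$. By hypothesis $x\mapsto({}_{a}{\mathcal I}^{1-\alpha,\sigma}f)(x,q)$ and $x\mapsto({\mathcal I}_{b}^{1-\beta,\rho}g)(x,q)$ belong to $C^{1}(J_{a}^{b},\mathbb H)$, and the real exponential weights are smooth and nowhere zero, so $F,G\in C^{1}(\overline{\Omega},\mathbb H)$ and \eqref{StokesHyp} applies to the pair $(F,G)$. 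Since the weights are real scalars they commute with $\nu_{x}^{\psi}$, $\sigma^{-1}$ and $\rho^{-1}$; hence on $\partial\Omega$ one has $G\,\nu_{x}^{\psi}\,F=({\mathcal I}_{b}^{1-\beta,\rho}g)\,{}^{\psi}\nu_{x,\gamma,\delta}\,({}_{a}{\mathcal I}^{1-\alpha,\sigma}f)$, while in the volume term \eqref{identity1} gives ${}^{\psi}\mathcal D[F]=e^{\sum_{k}x_{k}\delta_{k}}\sigma^{-1}({}^{\psi}_{a}{\mathcal D}^{\alpha,\sigma}f)$ and the right$/b$ version of \eqref{identity2} (proved verbatim, with ${}_{a}{\mathcal I}^{1-\beta,\rho}$ replaced by ${\mathcal I}_{b}^{1-\beta,\rho}$) gives ${}^{\psi}\mathcal D_{r}[G]=e^{\sum_{k}x_{k}\gamma_{k}}({}^{\psi}{\mathcal D}_{r,b}^{\beta,\rho}g)\rho^{-1}$. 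Substituting these into \eqref{StokesHyp} and absorbing the scalar $e^{\sum_{k}x_{k}(\gamma_{k}+\delta_{k})}$ into $dx_{\gamma,\delta}$ produces the claimed Stokes identity; this step is just bookkeeping of the scalar weights.

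\emph{Step 2: the Borel--Pompeiu formula.} With the same $F,G$, I would apply \eqref{BorelHyp} and, using its additivity in $(f,g)$, separate it into a left half (the terms carrying $F$; take $G\equiv0$) and a right half (the terms carrying $G$). In the left half, substitute $F$, invoke \eqref{identity1}, and factor $e^{\sum_{k}y_{k}\delta_{k}}=e^{\sum_{k}(y_{k}-x_{k})\delta_{k}}\,e^{\sum_{k}x_{k}\delta_{k}}$; cancelling the constant scalar $e^{\sum_{k}x_{k}\delta_{k}}$ that then multiplies every term (including the right-hand side $F(x)$) leaves an intermediate identity whose ingredients are ${}_{a}{\mathcal I}^{1-\alpha,\sigma}f$, ${}^{\psi}_{a}{\mathcal D}^{\alpha,\sigma}f$ and the weighted kernel $e^{\sum_{k}(y_{k}-x_{k})\delta_{k}}K_{\psi}(y-x)$, with right-hand side $({}_{a}{\mathcal I}^{1-\alpha,\sigma}f)(x,q)$ on $\Omega$ and $0$ off $\overline{\Omega}$; the right half is treated identically with $G,\gamma,\rho$. (One could equivalently produce this intermediate identity straight from the Stokes formula of Step 1 on $\Omega\setminus B(x,\varepsilon)$ by choosing $g$ so that $e^{\sum_{k}x_{k}\gamma_{k}}({\mathcal I}_{b}^{1-\beta,\rho}g)$ is the Cauchy kernel $K_{\psi}(\cdot-x)$ and letting $\varepsilon\to0$, paralleling the classical passage from \eqref{StokesHyp} to \eqref{BorelHyp}.) Then I would apply $\sum_{i=0}^{3}{}_{a_{i}}D^{\alpha_{i},\sigma_{i}}$, in the free variable $x$, to the intermediate left identity and $\sum_{i=0}^{3}D_{b_{i}}^{\beta_{i},\rho_{i}}$ to the intermediate right one, and add. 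On the boundary integrals these one-dimensional operators can be carried under $\int_{\partial\Omega}$ (there $y\neq x$, so $K_{\psi}(y-x)$ is smooth in $x$), where they reproduce exactly ${}_{a}K_{\psi}^{\alpha,\sigma}(y,x)$ and $K_{b,\psi}^{\beta,\rho}(y,x)$; on the singular volume integrals they are kept out in front, precisely as $\sum_{i}{}_{a_{i}}D^{\alpha_{i},\sigma_{i}}\{\cdot\}$ and $\sum_{i}D_{b_{i}}^{\beta_{i},\rho_{i}}\{\cdot\}$ appear in the statement.

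\emph{The crux.} The one genuinely non-formal computation is the right-hand side, $\sum_{i}{}_{a_{i}}D^{\alpha_{i},\sigma_{i}}\big[({}_{a}{\mathcal I}^{1-\alpha,\sigma}f)(x,q)\big]$ and its $g$-analogue. Writing $({}_{a}{\mathcal I}^{1-\alpha,\sigma}f)(x,q)=\sum_{l=0}^{3}({}_{a_{l}}I^{1-\alpha_{l},\sigma_{l}}f)(q_{0},\dots,x_{l},\dots,q_{3})$ and noting that the $l$-th summand depends, among $x_{0},\dots,x_{3}$, only on $x_{l}$, the double sum over $i$ and $l$ splits into a diagonal and an off-diagonal part. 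The diagonal $i=l$ collapses, by the fundamental theorem \eqref{FundTheoFPW} of proportional calculus, to $f(q_{0},\dots,x_{i},\dots,q_{3})$, producing $\sum_{i}f(q_{0},\dots,x_{i},\dots,q_{3})$; in the off-diagonal part $i\neq l$ the operator ${}_{a_{i}}D^{\alpha_{i},\sigma_{i}}$ acts on a function constant in $x_{i}$, so by linearity it factors as $({}_{a_{l}}I^{1-\alpha_{l},\sigma_{l}}f)(q_{0},\dots,x_{l},\dots,q_{3})\cdot{}_{a_{i}}D^{\alpha_{i},\sigma_{i}}(1)$, and summation over $i\neq l$ and $l$ reconstructs exactly ${}_{a}N^{\alpha,\sigma}(f,x,q)$, with $N_{b}^{\beta,\rho}(g,x,q)$ arising symmetrically from the right half. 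Adding the two halves yields the $x\in\Omega$ case, and the $x\in\mathbb H\setminus\overline{\Omega}$ case is inherited from the corresponding case of \eqref{BorelHyp}. I expect the main obstacle to be precisely this separation of diagonal and off-diagonal contributions, together with the analytic verification that the nonlocal one-dimensional operators ${}_{a_{i}}D^{\alpha_{i},\sigma_{i}}$ and $D_{b_{i}}^{\beta_{i},\rho_{i}}$ interchange with the surface integration and that the singular volume integrals need only be differentiated in the formal sense recorded by the braces; once these are granted, everything reduces, via Proposition \ref{DFracPropor}, to the classical identities \eqref{StokesHyp} and \eqref{BorelHyp}.
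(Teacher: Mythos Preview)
Your proposal is correct and follows essentially the same route as the paper: apply the classical formulas \eqref{StokesHyp} and \eqref{BorelHyp} to the exponentially weighted functions $F=e^{\sum_k x_k\delta_k}({}_a{\mathcal I}^{1-\alpha,\sigma}f)$ and $G=e^{\sum_k x_k\gamma_k}({\mathcal I}_b^{1-\beta,\rho}g)$, translate via Proposition~\ref{DFracPropor}, then hit the resulting Borel--Pompeiu identity with $\sum_i{}_{a_i}D^{\alpha_i,\sigma_i}$ (resp.\ $\sum_i D_{b_i}^{\beta_i,\rho_i}$), pass it under the boundary integral by Leibniz, and resolve the right-hand side by the diagonal/off-diagonal split using \eqref{FundTheoFPW}. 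The paper carries out exactly these steps in the same order; your identification of the diagonal/off-diagonal computation as the only nontrivial point matches where the paper invokes \eqref{FundTheoFPW}.
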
 

\begin{proof}
The quaternionic integral  Stokes' formula for $({\mathcal I}_b^{1-\beta,\rho} g )(x,q) e^{ \sum_{k=0}^3 x_k\gamma_k}$ and $({}_a{\mathcal I}^{1-\alpha,\sigma} f)(x,q)  e^{ \sum_{k=0}^3 x_k\delta_k}$ gives us that 
\begin{align*}
 & \int_{\partial  \Omega}   ( {\mathcal I}_b^{1-\beta,\rho}  g)(x,q) e^{ \sum_{k=0}^3 x_k(\gamma_k+ \delta_k)}  \nu^\psi_x  ({}_a{\mathcal I}^{1-\alpha,\sigma}  f )  (x,q) \\
  =  
   &   \int_{\Omega } \left( e^{ \sum_{k=0}^3 x_k\gamma_k}  ( {\mathcal I}_b^{1-\beta,\rho}  g )  (x,q)   
{}^\psi \mathcal  D[e^{ \sum_{k=0}^3 x_k\delta_k}  ({}_a{\mathcal I}^{1-\alpha,\sigma}  f )  (x,q)
]  \right. \\
& \left.  + {}^{{\psi}}\mathcal  D_r[e^{ \sum_{k=0}^3 x_k\gamma_k}  ( {\mathcal I}_b^{1-\beta,\rho}  g )  (x,q) ] e^{ \sum_{k=0}^3 x_k\delta_k}  ({}_a{\mathcal I}^{1-\alpha,\sigma}  f )  (x,q)
\right)   dx\\
  =  
   &   \int_{\Omega } \left[   ( {\mathcal I}_b^{1-\beta,\rho}  g )  (x,q)   
  \sigma^{-1}  ({}^{\psi}_a{\mathcal D}^{\alpha,\sigma}  f )(x,q)  \right. \\
& \left.  +  ({}^{\psi} {\mathcal D}_{r,b}^{\beta,\rho}  g )(x,q) \rho^{-1}   ({}_a{\mathcal I}^{1-\alpha,\sigma}  f )  (x,q)
\right]   e^{\sum_{k=0}^3 x_k (\gamma_k+\delta_k)} dx,
\end{align*}
where Proposition \ref{DFracPropor} was used.

Hyperholomorphic Borel-Pompieu formula, \eqref{BorelHyp} and Proposition \ref{DFracPropor}  allows to have that 
\begin{align*}  
 &  \int_{\partial \Omega}K_{\psi}(y-x)\nu_{y}^{\psi} e^{ \sum_{k=0}^3 y_k\delta_k}  ({}_a{\mathcal I}^{1-\alpha,\sigma}  f )(y,q)   
  \\
  & - 
\int_{\Omega} K_{\psi} (y-x) e^{\sum_{k=0}^3 y_k\delta_k} \sigma^{-1}    ({}^{\psi}_a{\mathcal D}^{\alpha,\sigma}f)(y,q) dy   \nonumber \\
		=  & 
		 \left\{ 
		 \begin{array}{ll}  e^{ \sum_{k=0}^3 x_k\delta_k}  ({}_a{\mathcal I}^{1-\alpha,\sigma}f)(x,q), &  x\in \Omega,  \\ 0 , &  x\in \mathbb H\setminus\overline{\Omega}                      
\end{array} \right. 
\end{align*} 
Applying on both sides the operator $\sum_{i=0}^3 {}_{a_i}D^{1-\alpha_i,\sigma_i}$, where ${}_{a_i}D^{1-\alpha_i,\sigma_i}$ is the fractional proportional partial derivative in coordinate $x_i$ for $i=0,1,2,3$ yields    
\begin{align*}  
 & \sum_{i=0}^3 {}_{a_i}D^{1-\alpha_i,\sigma_i}  
   \left[
  \int_{\partial \Omega} e^{ \sum_{k=0}^3 (y_k-x_k)\delta_k}K_{\psi}(y-x)\nu_{y}^{\psi}       ({}_a{\mathcal I}^{1-\alpha,\sigma}  f )(y,q)  \right]
  \\
  & - 
\sum_{i=0}^3 {}_{a_i}D^{1-\alpha_i,\sigma_i}  
 \left[ \int_{\Omega} e^{ \sum_{k=0}^3 (y_k-x_k)\delta_k} K_{\psi} (y-x)  \sigma^{-1}    ({}^{\psi}_a{\mathcal D}^{\alpha,\sigma}  f )(y,q) dy  \right] \nonumber \\
		=  &  \left\{
		 \begin{array}{ll} \sum_{i=0}^3 {}_{a_i}D^{1-\alpha_i,\sigma_i}  
		  ({}_a{\mathcal I}^{1-\alpha,\sigma}  f )(x,q)      , &  y\in \Omega,  
		  \\ 0 , &  y\in \mathbb H\setminus\overline{\Omega}.                     
\end{array} \right.      \\		
=  &  \left\{ \begin{array}{l} 
	\displaystyle 	\sum_{i,j=0}^3 {}_{a_i}D^{1-\alpha_i,\sigma_i} \
		  {}_{a_j}I^{1-\alpha_j,\sigma_j} f (q_0,\dots, x_j, \dots q_3)   
		      , \  x\in  \Omega,  \\
		 0 , \  x\in \mathbb H\setminus\overline{\Omega}.                     
\end{array} \right. 
\end{align*}
Using  Leibniz rule and \eqref{FundTheoFPW} then  the previous identity becomes  
\begin{align*}  
 & \int_{\partial \Omega} \sum_{i=0}^3 {}_{a_i}D^{1-\alpha_i,\sigma_i} \left[ e^{ \sum_{k=0}^3 (y_k-x_k)\delta_k}K_{\psi}(y-x)  \right]
  \nu_{y}^{\psi}       ({}_a{\mathcal I}^{1-\alpha,\sigma}  f )(y,q) \\
  & - 
\sum_{i=0}^3 {}_{a_i}D^{1-\alpha_i,\sigma_i}  
 \left[ \int_{\Omega} e^{ \sum_{k=0}^3 (y_k-x_k)\delta_k} K_{\psi} (y-x)  \sigma^{-1}    ({}^{\psi}_a{\mathcal D}^{\alpha,\sigma}  f )(y,q) dy  \right] \nonumber \\
		=  &  \left\{ 
		\begin{array}{l}
\displaystyle 		 \sum_{i=0}^3 f(q_0,\dots, x_i,\dots, q_3) +   
	\\	 \displaystyle    \sum_{i=0}^3  ({}_{a_i}{ I}^{1-\alpha_i,\sigma_i}  f )(q_0, \dots, x_i, \dots, q_3) 
		  \left[ \sum_{j=0, j\neq i}^3  {}_{a_j}D^{1-\alpha_j,\sigma_j}(1)   \right]  
		     ,\   x\in \Omega,  
		 \\ 0 , \   x\in \mathbb H\setminus\overline{\Omega}.                     
\end{array} \right. 
\end{align*}
Note that formula 
\begin{align*}  
 &    \int_{\partial \Omega} 
        ( {\mathcal I}_b^{1-\beta,\rho}  g )(y,q)  \nu_{y}^{\psi} \sum_{i=0}^3 D_{b_i}^{1-\beta_i,\rho_i}\left[ e^{ \sum_{k=0}^3 (y_k-x_k)\lambda_k}K_{\psi}(y-x)  \right]
        \\
  & - 
\sum_{i=0}^3  D_{b_i}^{1-\beta_i,\rho_i}  
 \left[ \int_{\Omega} ({}^{\psi} {\mathcal D}_{r,b}^{\beta,\rho}  g )(y,q)  e^{ \sum_{k=0}^3 (y_k-x_k)\lambda_k}  \rho^{-1}  K_{\psi} (y-x) dy  \right] \nonumber \\
		=  &  \left\{ 
\begin{array}{l}
		\displaystyle  \sum_{i=0}^3 g(q_0,\dots, x_i,\dots, q_3) +
	\\	 \displaystyle    \sum_{i=0}^3  ( { I}_{b_i}^{1-\beta_i,\rho_i}  g )(q_0, \dots, x_i, \dots, q_3) 
		  \left[ \sum_{j=0, j\neq i}^3   D_{b_j}^{1-\beta_j,\rho_j}(1)   \right]  
		     , \  x\in \Omega,  
		 \\ 0 , \  x\in \mathbb H\setminus\overline{\Omega},                     
\end{array} \right. 
\end{align*}
is obtained from similar computation. Combining the previous formulas completes the proof.
\end{proof}
 
We are thus led to the Cauchy type theorem and formula induced from ${}^{\psi}_a{\mathcal D}^{\alpha,\sigma}$ and ${}^{\psi} {\mathcal D}_{r,b}^{\beta,\rho}$.
\begin{cor} \label{CPFP}
Set $f,g \in AC^1(J_a^b,\mathbb H)$ such that the mappings $x\mapsto ({}_a{\mathcal I}^{1-\alpha,\sigma}f)(x,q)$ and  $x\mapsto ( {\mathcal I}_b^{1-\beta,\rho}  g )(x,q)$ belong to $C^1(\overline{J_a^b}, \mathbb H)$. 

Let $\Omega\subset J_a^b$ be an open bounded domain such that  $\partial \Omega$ is  a 3-dimensional smooth surface. If 
$$({}^{\psi}_a{\mathcal D}^{\alpha,\sigma}f)(x,q)= 0 = ({}^{\psi}{\mathcal D}_{r,b}^{\beta,\rho}f)(x,q), \quad \forall x\in J_a^b$$
then 
\begin{align*}
\int_{\partial \Omega}   ( {\mathcal I}_b^{1-\beta,\rho}  g )  (x,q)  \ {}^{\psi}  \nu_{x,\gamma,\delta}  \    ({}_a{\mathcal I}^{1-\alpha,\sigma}f)(x,q) = 0 
\end{align*}
and 
 \begin{align*}  
 &    \int_{\partial \Omega}\left(   K^{\alpha,\sigma}_{\psi}(y,x) \ \nu_{y}^{\psi}     \   ({}_a{\mathcal I}^{1-\alpha,\sigma}  f )(y,q)   +
    ( {\mathcal I}_b^{1-\beta, \rho }  g )(y,q) \ \nu_{y}^{\psi} \  K^{\beta,\rho}_{b,\psi}(y,x)        \right) 
    \\
	=  &  \left\{ 
		\begin{array}{l}
	\displaystyle 	 \sum_{i=0}^3 (f+g)(q_0,\dots, x_i,\dots, q_3) 
		 +   {}_aN^{\alpha,\sigma}(f,x,q )  +   N_b^{\beta,\rho}(g,x,q ) 
		     , \  x\in \Omega,  
		 \\ 0 , \  x\in \mathbb H\setminus\overline{\Omega}.                     
\end{array} \right. 
\end{align*}
\end{cor}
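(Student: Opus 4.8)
The plan is to read off both assertions directly from Proposition \ref{SBPFP}, specializing to the case in which the proportional fractional Fueter derivatives of $f$ vanish on $J_a^b$. First I would verify that the hypotheses of Proposition \ref{SBPFP} are met: the corollary assumes $x\mapsto({}_a{\mathcal I}^{1-\alpha,\sigma}f)(x,q)$ and $x\mapsto({\mathcal I}_b^{1-\beta,\rho}g)(x,q)$ belong to $C^1(\overline{J_a^b},\mathbb H)$, hence \emph{a fortiori} to $C^1(J_a^b,\mathbb H)$, and $\Omega\subset J_a^b$ is assumed to be an open bounded domain whose boundary $\partial\Omega$ is a $3$-dimensional smooth surface, exactly as required there.

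For the Cauchy-type theorem, I would invoke the quaternionic proportional fractional Stokes formula of Proposition \ref{SBPFP}. Its right-hand side is the volume integral
$$\int_{\Omega}\left[({\mathcal I}_b^{1-\beta,\rho}g)(x,q)\,\sigma^{-1}({}^{\psi}_a{\mathcal D}^{\alpha,\sigma}f)(x,q)+({}^{\psi}{\mathcal D}_{r,b}^{\beta,\rho}f)(x,q)\,\rho^{-1}({}_a{\mathcal I}^{1-\alpha,\sigma}f)(x,q)\right]dx_{\gamma,\delta}.$$
Under the hypothesis $({}^{\psi}_a{\mathcal D}^{\alpha,\sigma}f)(x,q)=0=({}^{\psi}{\mathcal D}_{r,b}^{\beta,\rho}f)(x,q)$ for all $x\in J_a^b$, both summands of the integrand vanish pointwise on $\Omega$, so the integral is zero; this is precisely the first displayed identity of the corollary.

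For the Cauchy-type integral formula I would specialize the Borel-Pompeiu formula of Proposition \ref{SBPFP} in the same way. The two volume terms occurring there, namely
$$\sum_{i=0}^3{}_{a_i}D^{\alpha_i,\sigma_i}\left\{\int_{\Omega}e^{\sum_{k=0}^3(y_k-x_k)\delta_k}K_{\psi}(y-x)\,\sigma^{-1}({}^{\psi}_a{\mathcal D}^{\alpha,\sigma}f)(y,q)\,dy\right\}$$
and
$$\sum_{i=0}^3 D_{b_i}^{\beta_i,\rho_i}\left\{\int_{\Omega}({}^{\psi}{\mathcal D}_{r,b}^{\beta,\rho}f)(y,q)\,e^{\sum_{k=0}^3(y_k-x_k)\gamma_k}\rho^{-1}K_{\psi}(y-x)\,dy\right\},$$
carry in their integrands the factors $({}^{\psi}_a{\mathcal D}^{\alpha,\sigma}f)(y,q)$ and $({}^{\psi}{\mathcal D}_{r,b}^{\beta,\rho}f)(y,q)$, which are identically zero on $J_a^b\supset\Omega$ by hypothesis. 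Hence each inner integral is the zero function of $x$, and the fractional proportional partial derivatives ${}_{a_i}D^{\alpha_i,\sigma_i}$, $D_{b_i}^{\beta_i,\rho_i}$ applied to it again give zero. Deleting these two terms from the Borel-Pompeiu formula leaves exactly the claimed identity, with unchanged right-hand side $\sum_{i=0}^3(f+g)(q_0,\dots,x_i,\dots,q_3)+{}_aN^{\alpha,\sigma}(f,x,q)+N_b^{\beta,\rho}(g,x,q)$ for $x\in\Omega$ and $0$ for $x\in\mathbb H\setminus\overline{\Omega}$.

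There is no genuine obstacle: the corollary is a formal consequence of Proposition \ref{SBPFP}, and the only point deserving a moment's attention is the bookkeeping remark that applying a linear fractional proportional operator to a volume integral whose integrand vanishes identically produces the zero function, so that the vanishing of the two Fueter-type derivatives really does remove every volume contribution from both formulas.
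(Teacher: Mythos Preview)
Your proposal is correct and mirrors the paper's approach: the corollary is stated there without proof, as an immediate specialization of Proposition~\ref{SBPFP} under the vanishing hypothesis on the proportional fractional Fueter derivatives. Your bookkeeping remark about the linearity of the fractional proportional operators applied to the identically zero volume integrand is the only nontrivial observation, and it is exactly what is implicitly used.
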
 

\begin{rem}\label{remRLC} 
\begin{enumerate}
\item It should be noted that Proposition \ref{SBPFP} and Corollary \ref{CPFP} show a common phenomenon of duality between ${}^{\psi}_a{\mathcal D}^{\alpha,\sigma}$ and ${}^{\psi} {\mathcal D}_{r,b}^{\beta,\rho} $. Analogously, the same phenomenon can be shown in the case of operators: ${}^{\psi}_a{\mathcal D_r}^{\alpha,\sigma}$ and  ${}^{\psi} {\mathcal D}_{b}^{\beta,\rho} $.
\item  Stokes and  Borel-Pompieu formulas induced  ${}^{\psi}_a{\mathcal D}^{\alpha,\sigma}$ and ${}^{\psi} {\mathcal D}_{r,b}^{\beta,\rho} $ and their corollaries and their duality phenomena above presented  can be  also obtained for ${}^{\psi,C}_a{\mathcal D}^{\alpha,\sigma}$ and ${}^{\psi,C} {\mathcal D}_{r,b}^{\beta,\rho}$ using the following relationships between ${}^{\psi}_a{\mathcal D}^{\alpha,\sigma}$ and ${}^{\psi, C}_a{\mathcal D}^{\alpha,\sigma}$ (similar relationships can be obtained for ${}^{\psi} {\mathcal D}_{r,b}^{\beta,\rho} $and ${}^{\psi,C} {\mathcal D}_{r,b}^{\beta,\rho}$):    
Suppose that ${h} \in AC^1(J_a^b,\mathbb H)$ such that $f(x) = {}_{a}\mathcal I^{1-\alpha, \sigma} h(x,q)$ also belongs to $AC^1(J_a^b,\mathbb H)$.
Then  
\begin{align*}
 &   \sum_{i=0}^3{}_{a_i} D^{1-\alpha_i, \sigma_i}[ \ {}_{a}^{\psi, C} \mathcal D^{\alpha, \sigma} f(x,q)] - \sum_{i=0}^3( {}_{a_i}I^{1-\alpha_i, \sigma_i} \ {}^{\psi}
    \mathcal D^{\sigma}f )(q_0, \dots, x_i, \dots, q_3) \lambda_i \\
    =  &\sum_{i=0}^3  {}_a^{\psi} \mathcal D^{\alpha,\sigma} h (q_0, \dots, x_i, \dots, q_3),
\end{align*}
where 
$$\displaystyle \lambda_i =  \sum_{\begin{array}{c}j=0\\ j\neq i \end{array}}{}_{a_j} D^{1-\alpha_j, \sigma_j} (1).$$
If ${h} \in AC^1(J_a^b,\mathbb H)$ such that $f = {}^{\psi}\mathcal D^{\sigma} h $ also belongs to $AC^1(J_a^b,\mathbb H)$ then   
$$\displaystyle {}_a^{\psi}\mathcal D^{\alpha,\sigma} f (x,q) = {}^{\psi} \mathcal D ^{\sigma}[{}_{a}^{\psi, C} \mathcal D^{\alpha, \sigma} h(x,q)],$$  for all $x\in J_a^b$.
\end{enumerate}
\end{rem}

\subsection{Quaternionic proportional fractional Fueter-type operator in Riemann-Louville and Caputo sense with respect to a real valued function}
Let us present the quaternionic version of the fractional proportional derivatives with respect to another function given in \eqref{IntFracPropRespecFuntL}, \eqref{IntFracPropRespecFuntR} and \eqref{DerFracPropRespecFuntL}. 

\begin{defn}
Let $\varphi\in C^1(\overline{J_a^b},\mathbb R)$ such that $(\varphi_i)(x_i):=\varphi(q_0, \dots, x_i, \dots q_3 )$, for all $x_i\in [a_i,b_i]$ and  
$$(\varphi_i)'(x_i):=\displaystyle\frac{\partial }{\partial x_i} \varphi(q_0, \dots, x_i, \dots q_3 ) > 0$$ 
for all $q= \sum_{k=0}^3\psi_k  q_k  \in J_a^b$ and all $x_i \in [a_i,b_i]$ for $i=0,1,2,3$.  The left fractional proportional integral of $ f \in AC^{1}(J_a^b,\mathbb H)$ by coordinates with order $\alpha$,  proportion related to $\sigma$  and  with respect to $\varphi$ is defined by
\begin{align*}
& ({}_a {\mathcal I}^{\alpha,\sigma,\varphi} f) (x,q) \\
= 
 & \sum _{i=0}^3
 \frac{1}{\sigma_i^{\alpha_i} \Gamma(\alpha_i)}
 \int_{a_i}^{x_i}
\dfrac{e^{\frac{\sigma_i-1}{\sigma_i} (\varphi_i(  x_i )-\varphi_i(  \tau_i  )) }    
\ f (q_0, \dots, \tau_i ,\dots, q_3)   \varphi_i'(  \tau_i  )
  }{(\varphi_i( x_i ) -  \varphi_i( \tau_i ))
^{1-{\alpha_i}}} d\tau \nonumber\\
= &\sum _{i=0}^3 ({}_{a_i} I^{\alpha_i,\sigma_i,\varphi_i} f) (q_0, \dots, x_i ,\dots, q_3), 
\end{align*}
Analogously, the right fractional proportional integral of $ f \in AC^{1}(J_a^b,\mathbb H)$ by coordinates with respect to $\varphi$ is     
\begin{align*}
({\mathcal I}_b^{\alpha,\sigma,\varphi} f) (x,q) = \sum _{i=0}^3 (I_{b_i}^{\alpha_i,\sigma_i,\varphi_i} f)(q_0, \dots, x_i ,\dots, q_3). 
\end{align*} 
The $\psi-$quaternionic  left  and   right  fractional proportional  Fueter operator with respect to $\varphi$ in the Riemann-Liouville sense  are given by  
\begin{align}\label{HDerFracPropRespecFuntL}
 {}^{\psi}_a\mathcal D^{\alpha, \sigma, \varphi }[f]  (x,q)  = &  
   {}^{\psi}\mathcal D^{ \sigma, \varphi} [({}_a{\mathcal I}^{1-\alpha,  \sigma, \varphi}  f )  (x,q)  ] 
\end{align} 
 and 
\begin{align}\label{HDerFracPropRespecFuntR}
 {}^{\psi}\mathcal D_b^{\alpha, \sigma, \varphi }[   f]  (x,q)  =  &
 {}^{\psi}\mathcal D^{\sigma, \varphi } [  ( {\mathcal I}_b^{1-\alpha,  \sigma, \varphi}  f )  (x,q) ],   
\end{align} 
respectively, where
\begin{align*}
   {}^{\psi}\mathcal D^{\sigma, \varphi}[h](x):=     (1-\sigma)  h(x) +
 \sigma   \left[ \sum_{k=0}^3(\varphi_k)'(x_k) \right]^{-1}
       ({}^{\psi}\mathcal D  h)(x), 
\end{align*}
for all $h\in C^1(J_a^b,\mathbb H)$.
The right versions of the previous operators are the following: 
\begin{align}\label{HDerFracPropRespecFuntLL}
  {}^{\psi}_a\mathcal D_r^{\alpha, \sigma, \varphi }[   f]  (x,q)  =  & 
     {}^{\psi} \mathcal D_r^{\sigma, \varphi } [   ({}_a{\mathcal I}^{1-\alpha,  \sigma, \varphi}  f )  (x,q) ] , \nonumber \\
   {}^{\psi}\mathcal D_{r,b}^{\alpha, \sigma, \varphi }[   f]  (x,q)  =  &
     {}^{\psi}\mathcal D_r^{\sigma,\varphi} [  ({\mathcal I}_b^{1-\alpha,  \sigma, \varphi}  f )  (x,q)],   
\end{align} 
respectively, where
\begin{align*}
{}^{\psi}\mathcal D_r^{\sigma, \varphi}[h](x):= h(x) (1-\sigma) + ({}^{\psi}\mathcal D_r  h)(x) \left[ \sum_{k=0}^3(\varphi_k)'(x_k) \right]^{-1} \sigma. 
\end{align*}
\end{defn}
\begin{defn}\label{FracPropRespecFuntLCaputo} 
The  $\psi-$quaternionic left and right fractional proportional Fueter operator with respect to $\varphi$ in the Caputo  sense  are given by  
\begin{align*}
 {}^{\psi,C}_a\mathcal D^{\alpha, \sigma, \varphi }[f]  (x,q)  = &    
    ({}_a{\mathcal I}^{1-\alpha,  \sigma, \varphi} \ {}^{\psi}\mathcal D^{ \sigma, \varphi } [ f] ) (x,q) 
 \end{align*} 
 and 
\begin{align*}
 {}^{\psi,C}\mathcal D_b^{\alpha, \sigma, \varphi }[   f]  (x,q)  =  &
   ( {\mathcal I}_b^{1-\alpha,  \sigma, \varphi} \  {}^{\psi}\mathcal D^{\sigma, \varphi } [f] )  (x,q) ,   
\end{align*} 
respectively.   
The right versions of the previous operators are the following: 
\begin{align*}
  {}^{\psi,C}_a\mathcal D_r^{\alpha, \sigma, \varphi }[   f]  (x,q)  =  & 
  ({}_a{\mathcal I}^{1-\alpha,  \sigma, \varphi} \ {}^{\psi}\mathcal D_r^{ \sigma, \varphi } [ f] )  (x,q) , \nonumber \\
   {}^{\psi,C}\mathcal D_{r,b}^{\alpha, \sigma, \varphi }[   f]  (x,q)  =   & 
   ( {\mathcal I}_b^{1-\alpha,  \sigma, \varphi} \ {}^{\psi}\mathcal D_r^{\sigma, \varphi } [f] )  (x,q) .   
\end{align*} 
\end{defn}

\begin{rem}  
We will define a quaternionic operator analogous to that given in \eqref{AxuliarR} and we will extend  the  relationship \eqref{AxuliarComputationsR} to achieve our goal.

\begin{align*}   &  {}^{\psi}_a\mathcal E^{\alpha, \sigma, \varphi }[   f]  (x,q)  \\
= & 
    \sum_{k=0}^3(\varphi_k)'(x_k)
 \sigma^{-1} (1-\sigma)  ({}_a{\mathcal I}^{1-\alpha,  \sigma, \varphi}  f )  (x,q)  + {}^{\psi}\mathcal D ({}_a{\mathcal I}^{1-\alpha,  \sigma, \varphi}  f )  (x,q) , \\   
 &  {}^{\psi}\mathcal E_b^{\alpha, \sigma, \varphi }[   f]  (x,q) \\
 =  & 
    \sum_{k=0}^3(\varphi_k)'(x_k)
 \sigma^{-1} (1-\sigma)  ( {\mathcal I}_b^{1-\alpha,  \sigma, \varphi}  f )  (x,q)  + {}^{\psi}\mathcal D ({\mathcal I}_b^{1-\alpha,  \sigma, \varphi}  f )  (x,q) ,\\
&   {}^{\psi}_a\mathcal E_r^{\alpha, \sigma, \varphi }[   f]  (x,q) \\
 =  & 
   ({}_a{\mathcal I}^{1-\alpha,  \sigma, \varphi}  f )  (x,q)   (1-\sigma)  \sigma^{-1}   \sum_{k=0}^3(\varphi_k)'(x_k)
     + {}^{\psi}\mathcal D_r ({}_a{\mathcal I}^{1-\alpha,  \sigma, \varphi}  f )  (x,q),\\
&  {}^{\psi}\mathcal E_{r,b}^{\alpha, \sigma, \varphi }[   f]  (x,q)  \\
= &  
    ( {\mathcal I}_b^{1-\alpha,  \sigma, \varphi}f)(x,q) (1-\sigma) \sigma^{-1} \sum_{k=0}^3(\varphi_k)'(x_k) + {}^{\psi}\mathcal D_r ({\mathcal I}_b^{1-\alpha,  \sigma, \varphi} f)(x,q) .   
\end{align*} 
We will suppose that there exist $\lambda_k \in  C^1(  [a_k, b_k],  \mathbb R)$ for $k=0,1,2,3$ such that  
$$\sum_{k=0}^3 \psi_k \frac{\partial {\lambda_k}}{\partial x_k} (x_k) = \sum_{k=0}^3(\varphi_k)'(x_k)\sigma^{-1} (1-\sigma).$$ 
Then 
\begin{align}\label{AuxHDerFracPropRespecFuntLYD}
&  e^{- \sum_{k=0}^3 \lambda_k(x_k)} {}^{\psi}\mathcal D[ e^{ \sum_{k=0}^3\lambda_k(x_k)}  ({}_a{\mathcal I}^{1-\alpha,\sigma, \varphi}  f )  (x,q)] \nonumber  \\
=  & \left\{
  \sum_{k=0}^3 \psi_k  \frac{\partial  \lambda_k }{\partial x_k}   ({}_a{\mathcal I}^{1-\alpha,\varphi}  f )  (x,q)  + {}^{\psi}\mathcal D ({}_a{\mathcal I}^{1-\alpha,\sigma,\varphi}  f )  (x,q)     \right\} \nonumber  \\
=  & \left\{
    \sum_{k=0}^3(\varphi_k)'(x_k)
 \sigma^{-1} (1-\sigma) ({}_a{\mathcal I}^{1-\alpha,\sigma, \varphi}  f )  (x,q)  + {}^{\psi}\mathcal D ({}_a{\mathcal I}^{1-\alpha,\sigma,\varphi}f)(x,q) \right\}  \nonumber  \\
 =  &   
  {}^{\psi}_a\mathcal E ^{\alpha,\sigma,\varphi}[f](x,q)  . 
  \end{align}
Analogously, if $\mu_k  \in C^1([a_k,b_k] ,  \mathbb R)$  for $k=0,1,2,3$ such that  
$$\sum_{k=0}^3 \psi_k \frac{\partial {\mu_k}}{\partial x_k} (x_k) =  
 \sum_{k=0}^3(\varphi_k)'(x_k)
  (1-\sigma) \sigma^{-1}.$$ 
Then 
\begin{align}\label{AuxHDerFracPropRespecFuntRYD}
{}^{\psi}_a\mathcal E_r^{\alpha,\sigma,\varphi}[f](x,q)   
= e^{ - \sum_{k=0}^3  \mu_k(x_k)} {}^{\psi}\mathcal D_r [ e^{ \sum_{k=0}^3\mu_k(x_k)} ({}_a{\mathcal I}^{1-\alpha,\sigma, \varphi}f)(x,q)].
\end{align}
\end{rem}
Stokes and Borel-Pompeiu formulas induced from ${}^{\psi}_a{\mathcal D}^{\alpha,\sigma, \varphi}$ and ${}^{\psi}{\mathcal D}_{r,b}^{\beta,\rho,\vartheta}$ are given as follows. 
\begin{prop}\label{SBPFPCF}
Let $\varphi, \vartheta \in C^1(J_a^b  ,\mathbb R)$ such that  
$$(\varphi_i)'(x_i):= \displaystyle\frac{\partial }{\partial x_i} \varphi(q_0, \dots, x_i, \dots q_3 )> 0$$ and $(\vartheta_i)'(x_i):=\displaystyle\frac{\partial }{\partial x_i} \vartheta (q_0, \dots, x_i, \dots q_3 ) > 0$  for all $q= \sum_{k=0}^3\psi_k  q_k  \in J_a^b$ and all $x_i \in [a_i,b_i]$ for  $i=0,1,2,3$. 

Suppose $\lambda_k, \mu_k  \in C^1( [a_k,b_k] , \mathbb R)$ for $k=0,1,2,3$ such that  
$$\sum_{k=0}^3 \psi_k \frac{\partial {\lambda_k}}{\partial x_k} (x_k) = \sum_{k=0}^3(\varphi_k)'(x_k)\sigma^{-1} (1-\sigma)$$ 
and 
$$\sum_{k=0}^3 \psi_k \frac{\partial {\mu_k}}{\partial x_k} (x_k) = \sum_{k=0}^3(\vartheta_k)'(x_k) (1-\rho)\rho^{-1}.$$ 
Let $\Omega\subset J_a^b$ be an open bounded domain with boundary $\partial \Omega$ a 3-dimensional smooth surface. 

Let $f,g \in AC^1(J_a^b,\mathbb H)$ such that the mappings $$x\mapsto ({}_a{\mathcal I}^{1-\alpha,\sigma, \varphi}f)(x,q), \quad x\mapsto ({\mathcal I}_b^{1-\beta,\rho, \vartheta}g)(x,q)$$ belong to $C^1( J_a^b, \mathbb H).$ Then
\begin{align*}
 & \int_{\partial \Omega} ({\mathcal I}_b^{1-\beta,\rho,\vartheta}g)(x,q) \  {}^\psi\nu_x^{\mu,\lambda}   \  ({}_a{\mathcal I}^{1-\alpha,\sigma, \varphi}f)(x,q) \\
  =  
   &   \int_{\Omega } \left\{   ( {\mathcal I}_b^{1-\beta,\rho,\vartheta}  g )  (x,q)   
    \left[ \sum_{k=0}^3(\varphi_k)'(x_k)
 \right]  \sigma ^{-1}  ({}^{\psi}_a{\mathcal D}^{\alpha,\sigma,\varphi}  f )(x,q)   \right. \\
& \left.  +  ({}^{\psi} {\mathcal D}_{r,b}^{\beta,\rho,\vartheta}  g )(x,q)  \rho^{-1} 
 \left[ \sum_{k=0}^3(\vartheta_k)'(x_k) \right] ({}_a{\mathcal I}^{1-\alpha,\sigma,\varphi}  f )  (x,q)
\right\}    dx^{\mu,\lambda},
\end{align*}  
where  ${}^\psi\nu_x^{\mu, \lambda}:= e^{\sum_{k=0}^3 \mu_k(x_k) + \lambda_k(x_k)}\nu^\psi_x $ and $dx^{\mu,\lambda} := e^{\sum_{k=0}^3 \mu_k(x_k)+\lambda_k(x_k)}dx$. 

The Borel Pompeiu type formula is given by
\begin{align*}  
 &  \int_{\partial \Omega} \left[ {}_a K_{\psi}^{\alpha, \sigma, \varphi}(y,x)
 \nu_{y}^{\psi} 
  ({}_a{\mathcal I}^{1-\alpha,\sigma, \varphi}  f )  (y,q)   
  +
  ( {\mathcal I}_b^{1-\beta,\rho, \vartheta}  g )  (y,q)    \nu_{y}^{\psi}   K_{b,\psi}^{\beta, \rho, \vartheta}(y,x) \right]
  \\
  & - \sum_{i=0}^3 {}_{a_i}D^{1-\alpha_i,\sigma_i, \varphi_i} \left\{
\int_{\Omega}  \mathcal H_\psi^{\lambda, \varphi} (y,x)   \sigma ^{-1}({}^{\psi}_a{\mathcal D}^{\alpha,\sigma,\varphi}  f )(y,q)    
   dy  \right\}  \\
   & - \sum_{i=0}^3 D_{b_i}^{1-\beta_i,\rho_i, \vartheta_i} \left\{
\int_{\Omega}       
  ({}^{\psi} {\mathcal D}_{r,b}^{\beta,\rho,\vartheta}  g )(y,q)  \rho^{-1} 
 \mathcal H_\psi^{\rho, \vartheta} (y,x)   dy  \right\}  \\
		=  &  \left\{ 
		\begin{array}{l}
		\displaystyle  \sum_{i=0}^3 (f+g)(q_0,\dots, x_i,\dots, q_3) + {}_aN^{\alpha, \sigma, \varphi}(f,x,q)   
		  + N_{b}^{\beta, \rho, \vartheta}(f,x,q)   ,  \   x\in \Omega, 
		 \\ 0 ,  \   x\in \mathbb H\setminus\overline{\Omega},                     
\end{array} \right. 
\end{align*}
where
\begin{align*}
 {}_aK_{\psi}^{\alpha, \sigma, \varphi}(y,x) : = &  \sum_{i=0}^3 {}_{a_i}D^{1-\alpha_i,\sigma_i, \varphi_i} 
\left[ K_{\psi}(y-x)  e^{ \sum_{k=0}^3(\lambda_k(y_k)-\lambda_k(x_k) )}\right], \\ 
  K_{b,\psi}^{\beta, \rho, \vartheta}(y,x) := &  \sum_{i=0}^3  D_{b_i}^{1-\beta_i,\rho_i, \vartheta_i} 
 \left[ K_{\psi}(y-x)  e^{ \sum_{k=0}^3(\lambda_k(y_k)-\lambda_k(x_k) )}\right], \\ 
 {}_aN^{\alpha, \sigma, \varphi}(f,x,q) = &  \sum_{i=0}^3  ({}_{a_i}{ I}^{1-\alpha_i,\sigma_i, \varphi_i}f)(q_0, \dots, x_i, \dots, q_3) \sum_{j=0, j\neq i}^3  {}_{a_j}D^{1-\alpha_j,\sigma_j, \varphi_j}(1)   , \\
 N_{b}^{\beta, \rho, \vartheta}(g,x,q) = & \sum_{i=0}^3  ( { I}_{ b_i}^{1-\beta_i,\rho_i, \vartheta_i}  g )(q_0, \dots, x_i, \dots, q_3) 
		   \sum_{j=0, j\neq i}^3   D_{b_j}^{1-\beta_j,\rho_j, \vartheta_j}(1)  , \\
 \mathcal H_\psi^{\lambda, \varphi} (y,x) = & K_{\psi} (y-x)     
  e^{   \sum_{k=0}^3 (\lambda_k(y_k) - \lambda_k(x_k) ) }  \left[ \sum_{k=0}^3(\varphi_k)'(y_k)\right], \\
  \mathcal H_\psi^{\mu, \vartheta} (y,x)= &  K_{\psi} (y-x)    e^{   \sum_{k=0}^3 (\mu_k(y_k) - \mu_k(x_k) ) }  \left[ \sum_{k=0}^3(\vartheta_k)'(y_k)
 \right]         ,
\end{align*}
     for all $x \in J_a^b$.  Here ${}_{a_i}D^{1-\alpha_i,\sigma_i,\varphi_i}$ and $D_{b_i}^{1-\beta_i,\rho_i,\vartheta_i}$ stand for the fractional proportional partial derivative in coordinate $x_i$  for $i=0,1,2,3$.
\end{prop}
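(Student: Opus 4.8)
The plan is to adapt the proof of Proposition~\ref{SBPFP} by replacing the affine twists $e^{\sum_k x_k\delta_k}$, $e^{\sum_k x_k\gamma_k}$ used there with the nonlinear twists $e^{\sum_k\lambda_k(x_k)}$, $e^{\sum_k\mu_k(x_k)}$ adapted to $\varphi$ and $\vartheta$, the role of Proposition~\ref{DFracPropor} being now played by the identities \eqref{AuxHDerFracPropRespecFuntLYD}--\eqref{AuxHDerFracPropRespecFuntRYD}. First I would record the elementary consequence of the definitions of ${}^{\psi}\mathcal D^{\sigma,\varphi}$ and ${}^{\psi}_a\mathcal E^{\alpha,\sigma,\varphi}$ that, since $\sum_k(\varphi_k)'(x_k)$ is a \emph{real} scalar and hence central,
$${}^{\psi}_a{\mathcal D}^{\alpha,\sigma,\varphi}[f](x,q)=\sigma\Big[\sum_{k=0}^3(\varphi_k)'(x_k)\Big]^{-1}{}^{\psi}_a\mathcal E^{\alpha,\sigma,\varphi}[f](x,q),$$
and likewise ${}^{\psi}_a\mathcal E_r^{\alpha,\sigma,\varphi}[f]={}^{\psi}_a{\mathcal D}_r^{\alpha,\sigma,\varphi}[f]\,\sigma^{-1}\big[\sum_k(\varphi_k)'\big]$; combining these with \eqref{AuxHDerFracPropRespecFuntLYD} and \eqref{AuxHDerFracPropRespecFuntRYD} gives
$${}^{\psi}\mathcal D\big[e^{\sum_k\lambda_k(x_k)}({}_a{\mathcal I}^{1-\alpha,\sigma,\varphi}f)(x,q)\big]=e^{\sum_k\lambda_k(x_k)}\Big[\sum_{k=0}^3(\varphi_k)'(x_k)\Big]\sigma^{-1}\,{}^{\psi}_a{\mathcal D}^{\alpha,\sigma,\varphi}[f](x,q),$$
together with the corresponding right-sided identity involving $\mu_k$, $\vartheta$, ${\mathcal I}_b^{1-\beta,\rho,\vartheta}$ and ${}^{\psi}{\mathcal D}_{r,b}^{\beta,\rho,\vartheta}$.

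For the Stokes identity I would apply the hyperholomorphic Stokes formula \eqref{StokesHyp} to $G(x):=e^{\sum_k\mu_k(x_k)}({\mathcal I}_b^{1-\beta,\rho,\vartheta}g)(x,q)$ and $F(x):=e^{\sum_k\lambda_k(x_k)}({}_a{\mathcal I}^{1-\alpha,\sigma,\varphi}f)(x,q)$, which belong to $C^1(\overline\Omega,\mathbb H)$ by hypothesis; the scalar exponentials commute through $\nu^{\psi}_x$ and combine on $\partial\Omega$ into ${}^{\psi}\nu_x^{\mu,\lambda}$, while the two conjugation identities above rewrite the integrand $G\,{}^{\psi}\mathcal D[F]+{}^{\psi}\mathcal D_r[G]\,F$ as the displayed bracketed expression multiplied by $e^{\sum_k(\mu_k(x_k)+\lambda_k(x_k))}$, which is $dx^{\mu,\lambda}/dx$.

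For the Borel--Pompeiu identity I would use the hyperholomorphic formula \eqref{BorelHyp} twice. Applied with vanishing second slot to $y\mapsto e^{\sum_k\lambda_k(y_k)}({}_a{\mathcal I}^{1-\alpha,\sigma,\varphi}f)(y,q)$, with the first conjugation identity substituted in the solid integral, and then multiplied through by the scalar $e^{-\sum_k\lambda_k(x_k)}$ (which slips past $\nu^{\psi}_y$ and $K_{\psi}$), it yields an identity whose integrands carry the factor $e^{\sum_k(\lambda_k(y_k)-\lambda_k(x_k))}$ and whose right-hand side is $({}_a{\mathcal I}^{1-\alpha,\sigma,\varphi}f)(x,q)$ for $x\in\Omega$ and $0$ for $x\notin\overline\Omega$. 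I then apply $\sum_{i=0}^3{}_{a_i}D^{1-\alpha_i,\sigma_i,\varphi_i}$ in the variable $x$ to both sides. On the right, expanding $({}_a{\mathcal I}^{1-\alpha,\sigma,\varphi}f)(x,q)=\sum_j({}_{a_j}I^{1-\alpha_j,\sigma_j,\varphi_j}f)(q_0,\dots,x_j,\dots,q_3)$, the diagonal terms give $\sum_i f(q_0,\dots,x_i,\dots,q_3)$ by \eqref{FundTheoFPW}, while for $i\neq j$ the summand is constant in $x_i$, so ${}_{a_i}D^{1-\alpha_i,\sigma_i,\varphi_i}$ only multiplies it by ${}_{a_i}D^{1-\alpha_i,\sigma_i,\varphi_i}(1)$, which assembles ${}_aN^{\alpha,\sigma,\varphi}(f,x,q)$. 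On the left, I pass the operator under the integral signs; on $\partial\Omega$ it lands on the kernel $e^{\sum_k(\lambda_k(y_k)-\lambda_k(x_k))}K_{\psi}(y-x)$ and produces ${}_aK_{\psi}^{\alpha,\sigma,\varphi}(y,x)$, while on $\Omega$ it is kept outside the integral of $\mathcal H_\psi^{\lambda,\varphi}(y,x)\,\sigma^{-1}({}^{\psi}_a{\mathcal D}^{\alpha,\sigma,\varphi}f)(y,q)$, exactly as in the statement. Repeating the argument with \eqref{BorelHyp} applied to $y\mapsto e^{\sum_k\mu_k(y_k)}({\mathcal I}_b^{1-\beta,\rho,\vartheta}g)(y,q)$, the operator $\sum_i D_{b_i}^{1-\beta_i,\rho_i,\vartheta_i}$ and \eqref{FundTheoFPW} produces the $g$-half, with $K_{b,\psi}^{\beta,\rho,\vartheta}$, $\mathcal H_\psi^{\mu,\vartheta}$ and $N_b^{\beta,\rho,\vartheta}(g,x,q)$; adding the two halves gives the asserted formula.

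The part I expect to require the most care is the last step: justifying that the nonlocal operators $\sum_i{}_{a_i}D^{1-\alpha_i,\sigma_i,\varphi_i}$ (each a proportional derivative composed with a weakly singular fractional integral in a single coordinate) may be interchanged with the integrals over $\Omega$ and $\partial\Omega$ — which is legitimate because $K_{\psi}(y-x)$ is only weakly singular and the twisting factors are $C^1$ — and tracking precisely, via linearity and the Leibniz rule for $D^{\sigma,\varphi}$, how the factors $e^{\pm\sum_k\lambda_k(x_k)}$ are absorbed into the kernels ${}_aK_{\psi}^{\alpha,\sigma,\varphi}$, $\mathcal H_\psi^{\lambda,\varphi}$ and into the cross-terms; the remaining manipulations are formally identical to those in the proof of Proposition~\ref{SBPFP}.
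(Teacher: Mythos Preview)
Your proposal is correct and follows essentially the same route as the paper's proof: apply the classical Stokes and Borel--Pompeiu formulas \eqref{StokesHyp}, \eqref{BorelHyp} to the exponentially twisted functions $e^{\sum_k\lambda_k(x_k)}({}_a{\mathcal I}^{1-\alpha,\sigma,\varphi}f)$ and $e^{\sum_k\mu_k(x_k)}({\mathcal I}_b^{1-\beta,\rho,\vartheta}g)$, convert the resulting $\mathcal E$-operators to $\mathcal D$-operators via the relations you record (these are exactly the identities the paper labels \eqref{EtoD}), and then hit the Borel--Pompeiu identity with $\sum_i{}_{a_i}D^{1-\alpha_i,\sigma_i,\varphi_i}$ (resp.\ $\sum_i D_{b_i}^{1-\beta_i,\rho_i,\vartheta_i}$), using \eqref{FundTheoFPW} on the diagonal and the constancy in $x_i$ of the off-diagonal summands to produce the $N$-terms. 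Your write-up is in fact slightly more explicit than the paper's about the interchange of the fractional operators with the integrals and about why the off-diagonal terms contribute only the factor ${}_{a_j}D^{1-\alpha_j,\sigma_j,\varphi_j}(1)$.
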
 

\begin{proof}
Combining hyperholomorphic Stokes'formula with \eqref{AuxHDerFracPropRespecFuntLYD} and \eqref{AuxHDerFracPropRespecFuntRYD} we obtain 
\begin{align*}
 & \int_{\partial J_a^b}   ({}_a{\mathcal I}^{1-\beta,\rho,\vartheta}  g )  (x,q)     e^{ \sum_{k=0}^3 \mu_k(x_k)+ \lambda_k(x_k)}  \nu^\psi_x     ({}_a{\mathcal I}^{1-\alpha,\sigma, \varphi}  f )  (x,q) \\
  =  
   &   \int_{\Omega } \left(   ({}_a{\mathcal I}^{1-\beta,\rho,\vartheta}  g )  (x,q)   
    ({}^{\psi}_a{\mathcal E}^{\alpha,\sigma,\varphi}  f )(x,q)  \right. \\
& \left.  +  ({}^{\psi}_a{\mathcal E}_r^{\beta,\rho,\vartheta}  g )(x,q)   ({}_a{\mathcal I}^{1-\alpha,\sigma,\varphi}  f )  (x,q)
\right)   e^{\sum_{k=0}^3   \mu_k(x_k)+\lambda_k(x_k) } dx.
\end{align*}
Note that  
\begin{align}\label{EtoD} 
({}^{\psi}_a{\mathcal E}^{\alpha,\sigma,\varphi}  f )(x,q) = &   \left[ \sum_{k=0}^3(\varphi_k)'(x_k)
 \right]  \sigma ^{-1}({}^{\psi}_a{\mathcal D}^{\alpha,\sigma,\varphi}  f )(x,q)    ,  \nonumber    \\
({}^{\psi}_a{\mathcal E}_r^{\beta,\rho,\vartheta}  g )(x,q) = &  ({}^{\psi}_a{\mathcal D}_r^{\beta,\rho,\vartheta}  g )(x,q)  \rho^{-1} 
 \left[ \sum_{k=0}^3(\vartheta_k)'(x_k)
 \right]       
\end{align}
to see that 
\begin{align*}
 & \int_{\partial J_a^b}   ({}_a{\mathcal I}^{1-\beta,\rho,\vartheta}  g )  (x,q)     e^{ \sum_{k=0}^3 \mu_k(x_k)+ \lambda_k(x_k)}  \nu^\psi_x     ({}_a{\mathcal I}^{1-\alpha,\sigma, \varphi}  f )  (x,q) \\
  =  
   &   \int_{\Omega } \left\{   ({}_a{\mathcal I}^{1-\beta,\rho,\vartheta}  g )  (x,q)   
    \left[ \sum_{k=0}^3(\varphi_k)'(x_k)
 \right]  \sigma ^{-1}  ({}^{\psi}_a{\mathcal D}^{\alpha,\sigma,\varphi}  f )(x,q) +  \right. \\
& \left.  \quad  ({}^{\psi}_a{\mathcal D}_r^{\beta,\rho,\vartheta}  g )(x,q)  \rho^{-1} 
 \left[ \sum_{k=0}^3(\vartheta_k)'(x_k) \right] ({}_a{\mathcal I}^{1-\alpha,\sigma,\varphi}  f )  (x,q)  
\right\} \times \\
&   \quad  e^{\sum_{k=0}^3   \mu_k(x_k)+\lambda_k(x_k) } dx.
\end{align*}
Applying quaternionic Borel-Pompieu formula and \eqref{AuxHDerFracPropRespecFuntLYD} yields 
\begin{align*}  
 &  \int_{\partial \Omega}K_{\psi}(y-x)\nu_{y}^{\psi}   e^{ \sum_{k=0}^3(\lambda_k(y_k)-\lambda_k(x_k) )}  ({}_a{\mathcal I}^{1-\alpha,\sigma, \varphi}  f )  (y,q)   
  \\
  & - 
\int_{\Omega}  K_{\psi} (y-x)     
  e^{   \sum_{k=0}^3 (\lambda_k(y_k) - \lambda_k(x_k) ) } {}^{\psi}_a\mathcal E ^{\alpha,\sigma,\varphi} [ f ](y,q)   
   dy   \nonumber \\
		=  & 
		 \left\{ 
		 \begin{array}{ll}  
		    ({}_a{\mathcal I}^{1-\alpha,\sigma, \varphi}  f )  (x,q)     , &  x\in J_a^b,  \\ 
		  0 , &  x\in \mathbb H\setminus\overline{J_a^b}.                     
\end{array} \right. 
\end{align*} 
Using \eqref{EtoD} and applying $\sum_{i=0}^3 {}_{a_i}D^{1-\alpha_i,\sigma_i, \varphi_i}$ on both sides sides of the previous formula we obtain 
\begin{align*}  
 &  \int_{\partial \Omega} \sum_{i=0}^3 {}_{a_i}D^{1-\alpha_i,\sigma_i, \varphi_i} 
 \left[ K_{\psi}(y-x)  e^{ \sum_{k=0}^3(\lambda_k(y_k)-\lambda_k(x_k) )}\right] 
 \nu_{y}^{\psi} 
  ({}_a{\mathcal I}^{1-\alpha,\sigma, \varphi}  f )  (y,q)   
  \\
  & - \sum_{i=0}^3 {}_{a_i}D^{1-\alpha_i,\sigma_i, \varphi_i} \{
\int_{\Omega}  K_{\psi} (y-x)     
  e^{\sum_{k=0}^3 (\lambda_k(y_k) - \lambda_k(x_k))}     \left[ \sum_{k=0}^3(\varphi_k)'(y_k)
 \right]  \times     \\
 &   \  \  \  \ 
    \sigma ^{-1}({}^{\psi}_a{\mathcal D}^{\alpha,\sigma,\varphi}  f )(y,q)     
   dy  \}  \nonumber \\
		=  &  \left\{ 
		\begin{array}{l}
		 \sum_{i=0}^3 f(q_0,\dots, x_i,\dots, q_3) + \\
	    \sum_{i=0}^3  ({}_{a_i}{ I}^{1-\alpha_i,\sigma_i, \varphi_i}  f )(q_0, \dots, x_i, \dots, q_3) 
		  (\sum_{j=0, j\neq i}^3  {}_{a_j}D^{1-\alpha_j,\sigma_j, \varphi_j}[1]   )  
		     , \  x\in \Omega  
		 \\ 0 , \  x\in \mathbb H\setminus\overline{\Omega},                     
\end{array} \right. 
\end{align*} 
where Leibniz rule and \eqref{FundTheoFPW} are used. Therefore,
\begin{align*}  
 &  \int_{\partial \Omega}{}_a K_{\psi}^{\alpha, \sigma, \varphi}(y,x)
  \  \nu_{y}^{\psi}  \
  ({}_a{\mathcal I}^{1-\alpha,\sigma, \varphi}  f )  (y,q)   
  \\
  & - \sum_{i=0}^3 {}_{a_i}D^{1-\alpha_i,\sigma_i, \varphi_i}  \{
\int_{\Omega}  K_{\psi} (y-x)     
  e^{   \sum_{k=0}^3 (\lambda_k(y_k) - \lambda_k(x_k) ) }  \left[ \sum_{k=0}^3(\varphi_k)'(y_k)
 \right] \times  \\
&  \ \ \ \   \sigma ^{-1}({}^{\psi}_a{\mathcal D}^{\alpha,\sigma,\varphi}  f )(y,q)    
   dy  \}  \\
		=  &  \left\{ 
		\begin{array}{l}
		 \sum_{i=0}^3 f(q_0,\dots, x_i,\dots, q_3) + {}_aN^{\alpha, \sigma, \varphi}(f,x,q)   
		     ,   x\in \Omega, 
		 \\ 0 ,   x\in \mathbb H\setminus\overline{\Omega}.                     
\end{array} \right. 
\end{align*} 
Formula 
  \begin{align*}  
 &  \int_{\partial \Omega}  
  ( {\mathcal I}_b^{1-\beta,\rho, \vartheta}  g )  (y,q)   \   \nu_{y}^{\psi} \   K_{b,\psi}^{\beta, \rho, \vartheta}(y,x)
  \\
   & - \sum_{i=0}^3  D_{b_i}^{1-\beta_i,\rho_i, \vartheta_i}   \{
\int_{\Omega}       
  ({}^{\psi} {\mathcal D}_{r,b}^{\beta,\rho,\vartheta}  g )(y,q)  \rho^{-1} 
 \left[ \sum_{k=0}^3(\vartheta_k)'(y_k)
 \right]    \times     
      \\
&  \ \ \ \  e^{   \sum_{k=0}^3 (\mu_k(y_k) - \mu_k(x_k) ) } K_{\psi} (y-x)    dy   \}  \\
		=  &  \left\{ 
		\begin{array}{l}
		 \sum_{i=0}^3 g(q_0,\dots, x_i,\dots, q_3)  
		  + N_b^{\beta, \rho, \vartheta}(f,x,q)   , \  x\in \Omega, 
		 \\ 0 , \  x\in \mathbb H\setminus\overline{\Omega}.                     
\end{array} \right. 
\end{align*} 
is obtained from similar reasoning and using right version of the operators used for $f$. The desired conclusion follows summing the two previous identities. 
\end{proof}
Finally, we present the Cauchy type theorem and formula induced from ${}^{\psi}_a{\mathcal D}^{\alpha,\sigma, \varphi}$ and 
${}^{\psi}{\mathcal D}_{r,b}^{\beta,\rho, \vartheta}$.
\begin{cor} \label{CORSBPFPCF} 
Set $f,g \in AC^1(J_a^b,\mathbb H)$  such that the mappings $$x\mapsto ({}_a{\mathcal I}^{1-\alpha,\sigma, \varphi}f)(x,q), \quad  x\mapsto ({\mathcal I}_b^{1-\beta,\rho, \vartheta}g)(x,q)$$ belong to $C^1(J_a^b, \mathbb H)$. If 
$$({}^{\psi}_a{\mathcal D}^{\alpha,\sigma, \varphi}f)(x,q)= 0 =({}^{\psi}{\mathcal D}_{r,b}^{\beta,\rho, \vartheta}f)(x,q), \quad \forall x\in J_a^b$$
then 
\begin{align*}
 & \int_{\partial \Omega}({\mathcal I}_b^{1-\beta,\rho,\vartheta}  g )  (x,q)     \nu^\psi_x (\mu,\lambda)    ({}_a{\mathcal I}^{1-\alpha,\sigma, \varphi}f)(x,q)=0 
\end{align*} 
and 
\begin{align*}  
 &  \int_{\partial \Omega} \left( {}_a K_{\psi}^{\alpha, \sigma, \varphi}(y,x)
 \nu_{y}^{\psi} 
  ({}_a{\mathcal I}^{1-\alpha,\sigma, \varphi}  f )  (y,q)   
  +
  ( {\mathcal I}_b^{1-\beta,\rho, \vartheta}  g )  (y,q)    \nu_{y}^{\psi}   K_{b,\psi}^{\beta, \rho, \vartheta}(y,x) \right)
  \\
		=  &  \left\{ 
		\begin{array}{l}
		 \sum_{i=0}^3 (f+g)(q_0,\dots, x_i,\dots, q_3) + {}_aN^{\alpha, \sigma, \varphi}(f,x,q)   
		  + N_{b}^{\beta, \rho, \vartheta}(f,x,q)   , \  x\in \Omega, 
		 \\ 0 , \  x\in \mathbb H\setminus\overline{\Omega},                     
\end{array} \right. 
\end{align*}
for all $x\in J_a^b$.   
\end{cor}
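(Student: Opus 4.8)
Corollary \ref{CORSBPFPCF} is the ``homogeneous'' specialization of Proposition \ref{SBPFPCF}, so the entire argument reduces to feeding the hypotheses
$$({}^{\psi}_a{\mathcal D}^{\alpha,\sigma, \varphi}f)(x,q)= 0 =({}^{\psi}{\mathcal D}_{r,b}^{\beta,\rho, \vartheta}f)(x,q), \qquad \forall x\in J_a^b,$$
into the two formulas already established there and observing that the volume integrals collapse. First I would recall from Proposition \ref{SBPFPCF} the Stokes-type identity
\begin{align*}
 & \int_{\partial \Omega} ({\mathcal I}_b^{1-\beta,\rho,\vartheta}g)(x,q) \  {}^\psi\nu_x^{\mu,\lambda}   \  ({}_a{\mathcal I}^{1-\alpha,\sigma, \varphi}f)(x,q) \\
  =
   &   \int_{\Omega } \left\{   ( {\mathcal I}_b^{1-\beta,\rho,\vartheta}  g )  (x,q)
    \left[ \sum_{k=0}^3(\varphi_k)'(x_k)
 \right]  \sigma ^{-1}  ({}^{\psi}_a{\mathcal D}^{\alpha,\sigma,\varphi}  f )(x,q)   \right. \\
& \left.  +  ({}^{\psi} {\mathcal D}_{r,b}^{\beta,\rho,\vartheta}  g )(x,q)  \rho^{-1}
 \left[ \sum_{k=0}^3(\vartheta_k)'(x_k) \right] ({}_a{\mathcal I}^{1-\alpha,\sigma,\varphi}  f )  (x,q)
\right\}    dx^{\mu,\lambda}.
\end{align*}
Under the assumption both summands of the integrand on the right vanish identically on $\Omega$ (the first because of ${}^{\psi}_a{\mathcal D}^{\alpha,\sigma,\varphi}f\equiv 0$, the second because ${}^{\psi}{\mathcal D}_{r,b}^{\beta,\rho,\vartheta}f\equiv 0$ and $g$ is taken equal to $f$ in this role, matching the convention already used silently in Proposition \ref{SBPFP}), so the boundary integral is $0$; here I note that $\nu^\psi_x(\mu,\lambda)$ in the statement of the Corollary is just alternative notation for ${}^\psi\nu_x^{\mu,\lambda}$ defined in Proposition \ref{SBPFPCF}.

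Next I would take the Borel--Pompeiu formula of Proposition \ref{SBPFPCF} verbatim and again invoke ${}^{\psi}_a{\mathcal D}^{\alpha,\sigma, \varphi}f\equiv 0$ and ${}^{\psi}{\mathcal D}_{r,b}^{\beta,\rho, \vartheta}f\equiv 0$. The two volume terms
$$\sum_{i=0}^3 {}_{a_i}D^{1-\alpha_i,\sigma_i, \varphi_i} \left\{\int_{\Omega}  \mathcal H_\psi^{\lambda, \varphi} (y,x)   \sigma ^{-1}({}^{\psi}_a{\mathcal D}^{\alpha,\sigma,\varphi}  f )(y,q)    dy  \right\}$$
and the corresponding one with $D_{b_i}^{1-\beta_i,\rho_i,\vartheta_i}$ and ${}^{\psi}{\mathcal D}_{r,b}^{\beta,\rho,\vartheta}g$ both have vanishing integrands, hence vanish. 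What remains on the left is exactly the boundary integral
$$\int_{\partial \Omega} \left( {}_a K_{\psi}^{\alpha, \sigma, \varphi}(y,x) \nu_{y}^{\psi}  ({}_a{\mathcal I}^{1-\alpha,\sigma, \varphi}  f )  (y,q) + ( {\mathcal I}_b^{1-\beta,\rho, \vartheta}  g )  (y,q)    \nu_{y}^{\psi}   K_{b,\psi}^{\beta, \rho, \vartheta}(y,x) \right),$$
while the right-hand side is unchanged, giving the asserted Cauchy-type formula with the two cases $x\in\Omega$ and $x\in\mathbb H\setminus\overline\Omega$.

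The only point needing a little care — and the nearest thing to an obstacle — is bookkeeping: making sure the regularity hypothesis stated here ($x\mapsto({}_a{\mathcal I}^{1-\alpha,\sigma,\varphi}f)(x,q)$ and $x\mapsto({\mathcal I}_b^{1-\beta,\rho,\vartheta}g)(x,q)$ in $C^1(J_a^b,\mathbb H)$) is precisely what Proposition \ref{SBPFPCF} requires, that $\Omega\subset J_a^b$ is the same admissible domain, and that the auxiliary functions $\lambda_k,\mu_k$ with $\sum_k\psi_k\partial_{x_k}\lambda_k=\sum_k(\varphi_k)'(x_k)\sigma^{-1}(1-\sigma)$ and $\sum_k\psi_k\partial_{x_k}\mu_k=\sum_k(\vartheta_k)'(x_k)(1-\rho)\rho^{-1}$ are in force so that the kernels ${}_aK_{\psi}^{\alpha,\sigma,\varphi}$, $K_{b,\psi}^{\beta,\rho,\vartheta}$ and the remainders ${}_aN^{\alpha,\sigma,\varphi}(f,x,q)$, $N_b^{\beta,\rho,\vartheta}(g,x,q)$ are well defined. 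Once these are matched, no genuinely new computation is needed: the Corollary is a direct corollary in the literal sense, and I would present it as such rather than re-deriving the Stokes or Borel--Pompeiu steps.
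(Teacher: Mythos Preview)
Your proposal is correct and matches the paper's approach: the paper states Corollary \ref{CORSBPFPCF} without proof, treating it as an immediate consequence of Proposition \ref{SBPFPCF} once the two fractional derivatives vanish, which is exactly the argument you outline. Your additional remarks on matching the regularity hypotheses and the notational identification $\nu^\psi_x(\mu,\lambda)={}^\psi\nu_x^{\mu,\lambda}$ are appropriate bookkeeping and do not go beyond what the paper (implicitly) assumes.
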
 
\begin{rem} 
In Proposition \ref{SBPFPCF} and Corollary \ref{CORSBPFPCF} the duality phenomenon between operators ${}^{\psi}_a{\mathcal D}^{\alpha,\sigma, \varphi}$ and ${}^{\psi}{\mathcal D}_{r,b}^{\beta,\rho, \vartheta}$ is presented and from similar computations we can obtain the duality in these statements for ${}^{\psi}_a{\mathcal D}_r^{\alpha,\sigma, \varphi} $ and ${}^{\psi}{\mathcal D}_{b}^{\beta,\rho, \vartheta} $.
The following facts quite clearly indicate that the fractional differential operators were established preserving the structure of the real case and a natural order among them.     
\begin{enumerate}
\item If $\varphi (x)= x_0+x_1+x_2+x_3$ then $4({}^{\psi}_a{\mathcal D}^{\alpha,\sigma, \varphi}f)(x,q) = ({}^{\psi}_a{\mathcal D}^{\alpha,\sigma}f)(x,q) + 3 (1-\sigma)   ({}_a{\mathcal I}^{1-\alpha,\sigma}f)(x,q)$.
\item If $\varphi (x)= x_0+x_1+x_2+x_3$ and $\sigma_0=\sigma_1= \sigma_2 =\sigma_3 =1$ then $({}^{\psi}_a{\mathcal D}^{\alpha,\sigma, \varphi}f)(x,q)= (1- \psi_0 -\psi_1-\psi_2-\psi_3)({}_a \mathcal I^{\alpha} f)(x,q) + \frac{1}{4}(\psi_0+ \psi_1+\psi_2 + \psi_3) ({}^{\psi}_a{\mathcal D}^{\alpha}f)(x,q)$, where $({}^{\psi}_a{\mathcal D}^{\alpha}f)(x,q)$  is the quaternionic fractional $\psi-$Fueter type operator studied in \cite{GB} and extended with respect to another vector valued function in \cite{BG2}.
\item If $\varphi(x) =\frac{ x_0^{\mu}}{\mu} + \frac{ x_1^{\mu}}{\mu}+\frac{ x_2^{\mu}}{\mu}+\frac{x_3^{\mu}}{\mu}$ and $\sigma_0=\sigma_1= \sigma_2 =\sigma_3 =1$ then the real components of our quaternionic fractional operators are related with the fractional partial derivatives in the Katugampola setting.
\item If $\varphi(x) =\ln(x_0) + \ln(x_1) + \ln(x_2)+ \ln(x_3)$ and $\sigma_0=\sigma_1= \sigma_2 =\sigma_3 =1$ then the real components of our quaternionic fractional operators are related with the Hadamard fractional partial derivatives.
\end{enumerate}
\end{rem}

\begin{rem} 
The  quaternionic fractional operators ${}^{\psi}_a\mathcal D^{\alpha, \sigma, \varphi}$ and  ${}^{\psi,C}_a\mathcal D^{\alpha, \sigma, \varphi}$  satisfy  similar relationships  to  presented in the  second fact of Remark \ref{remRLC} between    
${}^{\psi}_a\mathcal D^{\alpha, \sigma } $ and  ${}^{\psi,C}_a\mathcal D^{\alpha, \sigma}$, also the same behaviour is presented for   ${}^{\psi} \mathcal D_b^{\alpha, \sigma ,\varphi}$ and  ${}^{\psi,C} \mathcal D_b^{\alpha, \sigma ,\varphi}$.
So using these identities we can obtain the version of all results given in  this subsection for  the quaternionic fractional operators in Caputo sense given in Definition \ref{FracPropRespecFuntLCaputo}.
\end{rem}

\section*{Acknowledgments}
The authors wish to thank the Instituto Polit\'ecnico Nacional (grant numbers SIP20232103, SIP20230312) for partial support.

\end{document}